\newtheorem{theorem}{Theorem}[section]
\newtheorem{cor}[theorem]{Corollary}
\newtheorem{lem}[theorem]{Lemma}
\theoremstyle{definition}
\newtheorem{beisp}[theorem]{Example}
\newtheorem{definit}[theorem]{Definition}
\newtheorem*{rem*}{Remark}
\DeclareMathOperator{\Inn}{Inn}
\DeclareMathOperator{\Tr}{trace}
\DeclareMathOperator{\aut}{Aut}
\DeclareMathOperator{\id}{id}
\DeclareMathOperator{\Gl}{GL}
\DeclareMathOperator{\Sl}{SL}
\DeclareMathOperator{\Sp}{SP}
\DeclareMathOperator{\M}{M}
\renewcommand{\epsilon}{\varepsilon}
\renewcommand{\phi}{\varphi}
\renewcommand{\rho}{\varrho}
\renewcommand{\theta}{\vartheta}
\renewcommand{\le}{\leqslant}
\renewcommand{\ge}{\geqslant}
\begin{document}

\markboth{Benim, Hunnell, and Sutherland}
{Isomorphy Classes of Finite Order Automorphisms of $\Sl(2, k)$}

%%%%%%%%%%%%%%%%%%%%% Publisher's Area please ignore %%%%%%%%%%%%%%%
%
%\catchline{}{}{}{}{}
%
%%%%%%%%%%%%%%%%%%%%%%%%%%%%%%%%%%%%%%%%%%%%%%%%%%%%%%%%%%%%%%%%%%%%

\title{ISOMORPHY CLASSES OF FINITE ORDER AUTOMORPHISMS OF $\Sl(2, k)$}

\author{\footnotesize ROBERT W. BENIM}

\address{Department of Mathematics and Computer Science, Pacific University\\ Forest Grove, OR
97166, USA\\
rbenim@gmail.com}

\author{MARK HUNNELL}

\address{Department of Mathematics, North Carolina State University\\
Raleigh, NC 27695, USA\\
mchunne@ncsu.edu}

\author{AMANDA K. SUTHERLAND}

\address{Department of Mathematics, North Carolina State University\\
Raleigh, NC 27695, USA\\
aksuther@ncsu.edu}

\maketitle

%\begin{history}
%\received{(Day Month Year)}
%\revised{(Day Month Year)}
%\accepted{(Day Month Year)}
%\comby{(xxxxxxxxx)}
%\end{history}

\begin{abstract}
In this paper, we consider the order $m$ $k$-automorphisms of $\Sl(2,k)$. We first characterize the forms that order $m$ $k$-automorphisms of $\Sl(2,k)$ take and then we simple conditions on matrices $A$ and $B$, involving eigenvalues and the field that the entries of $A$ and $B$ lie in, that are equivalent to isomorphy between the order $m$ $k$-automorphisms $Inn_A$ and $Inn_B$. We examine the number of isomorphy classes and conclude with examples for selected fields. 
\end{abstract}

%\keywords{symmetric $k$-varieties, involutions, algebraic groups, $k$-automorphisms}

%\ccode{2010 Mathematics Subject Classification: 20G15}

\section{Introduction}

Let $G$ be a connected reductive algebraic group defined over a
field $k$ of characteristic not two, $\theta$ an involution of $G$
defined over $k$, $H$ a $k$-open subgroup of the fixed point group
of $\theta$ and $G_k$ (resp. $H_k$) the set of $k$-rational points
of $G$ (resp. $H$). The variety $G_k/H_k$ is called a symmetric
$k$-variety. For $k=\mathbb{R}$ these symmetric
$k$-varieties are also called real reductive symmetric spaces. 
These varieties occur in many problems in
representation theory, geometry and singularity theory. 
To study
these symmetric $k$-varieties one needs first a classification of
the related $k$-involutions. A characterization of the isomorphism
classes of $k$-involutions was given in \cite{Hel00}.

In \cite{HW02}, a full characterization of the isomorphism
classes of $k$-involutions was given in the case that $G=\Sl(2, k)$ which 
does not depend on any of the results in
\cite{Hel00}. 
Similarly, this is done for $\Sl(n,k)$ in \cite{HWD04}. Using this characterization, the
possible isomorphism classes for algebraically
closed fields, the real numbers, the $p$-adic numbers, and the finite
fields were classified. Analogous results for isomorphism classes of involutions of connected reductive algebraic groups can be found in \cite{Hut14} for the exceptional group $G_2$ and in \cite{BHJxx} for symplectic groups.

This concept can be generalized
 by considering order $m$ $k$-automorphisms of $G$ instead of $k$-involutions, which are of order two. We can then construct, in an analogous fashion, a generalized symmetric $k$-variety. To study
these generalized symmetric $k$-varieties, first one needs a classification of
the related order $m$ $k$-automorphisms. 

In this paper, we consider the order $m$ $k$-automorphisms of $\Sl(2,k)$ and characterize the isomorphy classes of these automorphisms. Throughout, we assume $m \ge 2$. In Section 2, we define some of the basic terminology that will be used and state previous results on the $k$-involutions of $\Sl(2,k)$. In Section 3, we characterize the form that order $m$ $k$-automorphisms of $\Sl(2,k)$ take. In Section 4, we find simple conditions on matrices $A$ and $B$, involving eigenvalues and the field that the entries of $A$ and $B$ lie in, that are equivalent to isomorphy between order $m$ $k$-automorphisms $\Inn_A$ and $\Inn_B$. In Section 5, we examine the occurrence of $m$-valid eigenpairs, which indicate an order $m$ $k$-automorphism. In Sections 6, we consider the number of isomorphy classes for a given field $k$, and order $m$. We conclude in Section 7 by examining the cases when $k = \overline{k}, \mathbb{R}$, $\mathbb{Q}$, or $\mathbb{F}_p$.

\section{Preliminaries}
We begin by defining some basic notation. Let $k$ be a field of characteristic not two, $\bar k$ the algebraic closure of $k$,
$$\M(2,k)=\{ 2\times 2 \text{-matrices with
entries in $k$} \}, $$
$$\Gl(2,k)= \{ A\in \M(2,k)\mid \det
(A)\neq 0\}$$  and
$$\Sl(2,k)= \{ A\in \M(2,k)\mid \det
(A)=1\}. $$  Let $k^*$ denote the multiplicative group of nonzero
elements of $k$, $(k^*)^2=\{a^2\mid a\in k^*\}$ denote the set of squares in $k$ and $I \in \M(2,k)$
denote the identity matrix. 

\begin{definit}
\label{isoinv} 
Let $G$ be an algebraic groups defined over a field $k$. Let $G_k$ be the $k$-rational points of $G$. Let $\aut(G, G_k)$ denote the the set of $k$-automorphisms of $G_k$. That is, $\aut(G, G_k)$ is the set of automorphisms of $G$ which leave $G_k$invariant. We say $\theta \in \aut(G,G_k)$ is a {\it $k$-involution} if $\theta^2 = \id$ but $\theta \ne \id$. A $k$-involution is a $k$-automorphism of order 2.

For $A \in G_k$, the map $\Inn_A(X) = A^{-1}XA$ is called an {\it inner $k$-automorphism of $G_k$}. We denote the set of such $k$-automorphisms by $\Inn(G_k)$. If $\Inn_A \in \Inn(G_k)$ is a $k$-involution, then we say that $\Inn_A$ is an {\it inner $k$-involution of $G_k$}.

Assume $H$ is an algebraic group defined over $k$ which contains $G$. Let $H_k$ be the $k$-rational points of $H$. For $A \in H$, if the map $\Inn_A(X) = A^{-1}XA$ is such that $\Inn_A \in \aut(G,G_k)$, then $\Inn_A$ is an {\it inner $k$-automorphism of $G_k$ over $H$}. We denote the set of such $k$-automorphisms by $\Inn(H,G_k)$. If $\Inn_A \in \Inn(H,G_k)$ is a $k$-involution, then we say that $\Inn_A$ is an {\it inner $k$-involution of $G_k$ over $H$}.

Suppose $\theta, \tau \in \aut(G,G_K)$. Then $\theta$ is {\it isomorphic} to $\tau$ {\it over $H_k$} if there is $\phi$ in ${\Inn(H_k)}$ such that $\tau=\phi ^{-1}\theta\phi$. Equivalently,  we say that $\tau$ and $\theta$ are in the same \textit{isomorphy class over $H_k$}.

\end{definit}

For simplicity, we will refer to $k$-automorphisms simply as automorphisms for the remainder of this paper.

\begin{definit}
For a field $k$, we will refer to $k^*/ (k^*)^2$ as the {\it square classes} of $k$.
\end{definit}

For example, if $k = \overline{k}$, then $|k^*/ (k^*)^2| = 1$ where 1 is a representative of this single square class. Further, $|\mathbb{R}^*/ (\mathbb{R}^*)^2| = 2$ with representatives $\pm1$; the set $\{\mathbb{Q}^*/ (\mathbb{Q}^*)^2\}$ is infinite with representatives $\pm 1$ and all the prime numbers.

The following is the main result of \cite{HW02}.

\begin{theorem}
Let $k$ be a field of characteristic not two. Then $\Sl(2,k)$ has exactly $|k^*/ (k^*)^2| $ isomorphy classes of involutions.
\end{theorem}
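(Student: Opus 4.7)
The plan is to show that the assignment $\Inn_A \mapsto [\det A] \in k^*/(k^*)^2$ descends to a bijection between isomorphy classes of involutions of $\Sl(2,k)$ and the square classes of $k$.

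First I would classify the involutions themselves. Since every $k$-automorphism of $\Sl(2,k)$ is inner, each involution has the form $\Inn_A$ for some $A \in \Gl(2,k)$, determined up to a scalar. The condition $\Inn_A^2 = \id$ forces $A^2$ to be central in $\Gl(2,k)$, so $A^2 = \lambda I$ for some $\lambda \in k^*$; non-triviality forces $A$ non-scalar. Applying Cayley--Hamilton to $\chi_A(x) = x^2 - \Tr(A)x + \det(A)$ yields $\Tr(A) \cdot A = (\lambda + \det A) I$, whence $\Tr(A) = 0$ and $\lambda = -\det A$. Conversely, every trace-zero $A \in \Gl(2,k)$ satisfies $A^2 = -\det(A) I$ and therefore gives an involution. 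Thus involutions correspond exactly to trace-zero matrices in $\Gl(2,k)$ modulo the scaling action $A \sim cA$, $c \in k^*$.

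Next I would verify that $[\det A]$ is a well-defined invariant and that every square class is attained. If $\Inn_B$ is isomorphic to $\Inn_A$ via conjugation by $\Inn_C$, the identity $\Inn_C^{-1} \Inn_A \Inn_C = \Inn_{CAC^{-1}}$ gives $B = c \cdot CAC^{-1}$ for some $c \in k^*$, so $\det B = c^2 \det A$. Surjectivity onto $k^*/(k^*)^2$ is immediate: for any $\alpha \in k^*$ the companion-type matrix $\begin{pmatrix} 0 & -\alpha \\ 1 & 0 \end{pmatrix}$ is trace zero with determinant $\alpha$.

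For injectivity, suppose $\det A = s^2 \det B$ for trace-zero $A, B \in \Gl(2,k)$. Then $sB$ has trace zero and determinant $\det A$, so shares with $A$ the characteristic polynomial $x^2 + \det A$. Because $\det A \ne 0$ and $\cha(k) \ne 2$, this polynomial has distinct roots in $\bar k$; over $k$ it is either irreducible or splits. In the first case rational canonical form puts both matrices in the companion form of $x^2 + \det A$; in the second case both diagonalize over $k$ to the same matrix. Either way $A$ and $sB$ are $\Gl(2,k)$-conjugate, which in turn yields $\Inn_A \sim \Inn_B$. Combining these steps produces the required bijection and the count $|k^*/(k^*)^2|$.

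The step I expect to demand the most care is the last: rational canonical form supplies $\Gl(2,k)$-conjugacy, but if isomorphy is declared over $\Sl(2,k)$ rather than $\Gl(2,k)$, a single $\Gl(2,k)$-orbit of trace-zero matrices with fixed determinant $d$ may break into several $\Sl(2,k)$-orbits indexed by $k^*$ modulo the image of the norm form of $k(\sqrt{-d})/k$. One must then verify that the symmetry $A \leftrightarrow -A$, which does not change $\Inn_A$, regluues these extra orbits so that a single isomorphy class per square class remains. Handling this compatibility uniformly across fields is the main obstacle.
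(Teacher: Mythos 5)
Your proof is correct, and it is genuinely more self-contained than what the paper does: for this theorem the paper simply cites the main result of [HW02], and only later ``confirms'' it inside the proof of Theorem \ref{NumberClassesThm} by observing that $(i,-i)$ is the unique $2$-valid eigenpair and invoking Lemma \ref{SquareClassLem}. The two routes differ mainly in normalization. The paper rescales a representative $B\in\Gl(2,k)$ to $A=(\det B)^{-1/2}B\in\Sl(2,k[\sqrt{\alpha}])$ with $\alpha=\det B$, so its invariant is the square class of $\alpha$ and the isomorphy criterion becomes ``same eigenvalues up to sign within a fixed quadratic extension'' (Corollary \ref{IsomorphyCor} plus Lemma \ref{SquareClassLem}); you instead stay in $\Gl(2,k)$, characterize involutions via Cayley--Hamilton as $\Inn_A$ with $\Tr(A)=0$, and take $[\det A]\in k^*/(k^*)^2$ as the complete invariant, with rational canonical form supplying the $\Gl(2,k)$-conjugacy. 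Since $\alpha=\det B$, these are literally the same invariant, and your injectivity step is the unnormalized counterpart of the paper's ``same eigenvalues implies isomorphic'' argument. Your approach buys a clean, elementary proof with no reference to quadratic extensions or roots of unity, but it is specific to $m=2$ (the trace-zero characterization has no analogue for higher order), whereas the paper's eigenvalue framework is what generalizes to order $m$. Two small remarks: the fact that every automorphism of $\Sl(2,k)$ is $\Inn_A$ for $A\in\Gl(2,k)$ (not merely $\Gl(2,\overline{k})$), and the ``determined up to scalar'' claim, are themselves imported from [HW02] (Lemmas 4 and 2 there, the latter being Lemma \ref{helmwu} here), so you should cite them rather than treat them as free; and your closing worry about $\Sl(2,k)$- versus $\Gl(2,k)$-orbits is moot under the paper's conventions, since isomorphy is defined via $\Inn(H_k)$ with $H_k=\Gl(2,k)$.
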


We will confirm this result in this paper, and see that the number of isomorphy classes of order $m$ automorphisms where $m >2$ does not depend on $|k^*/ (k^*)^2|$.

%%%%%%%%%%

\section{Inner Automorphisms of $\Sl(2,k)$}

Since the Dynkin diagram of $\Sl(2,k)$ has a trivial automorphism group, we know that all automorphisms of $\Sl(2, k)$ are of the form $\Inn_B$ for some $B \in \Gl(2,\overline{k})$. We improve upon this fact in the following lemma.

\begin{lem}
\label{kMult}

If $\phi$ is an automorphisms of $\Sl(2,k)$, then $\phi = \Inn_A$ for some $A \in \Sl(2,k[\sqrt{\alpha}])$ where $\alpha \in k$, where each entry of $A$ is a $k$-multiple of $\sqrt{\alpha}$.

\end{lem}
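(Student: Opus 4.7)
The plan is to start from the observation, stated just before the lemma, that every automorphism $\phi$ of $\Sl(2,k)$ has the form $\Inn_B$ for some $B \in \Gl(2,\overline{k})$. The task is to replace $B$ by a suitable scalar multiple $A$ whose determinant is $1$ and each of whose entries is a $k$-multiple of $\sqrt{\alpha}$ for a single $\alpha \in k^*$.

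The main step is to descend $B$ to a matrix with entries in $k$, up to an overall scalar in $\overline{k}^*$. I would do this via Galois descent. For any $\sigma \in \gal(\overline{k}/k)$ and any $X \in \Sl(2,k)$ we have $\sigma(X) = X$, so
\[
\sigma(B)^{-1} X \sigma(B) \;=\; \sigma(B^{-1}XB) \;=\; B^{-1}XB,
\]
which forces $B\sigma(B)^{-1}$ to centralize $\Sl(2,k)$ in $\Gl(2,\overline{k})$ and therefore to be a scalar. Writing $\sigma(B) = \lambda_\sigma B$, a short computation yields the $1$-cocycle identity $\lambda_{\sigma\tau} = \lambda_\sigma\,\sigma(\lambda_\tau)$. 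Hilbert 90 then produces $\mu \in \overline{k}^*$ with $\lambda_\sigma = \sigma(\mu)/\mu$ for all $\sigma$, whereupon $A_0 := \mu^{-1} B$ is $\gal(\overline{k}/k)$-invariant, hence $A_0 \in \Gl(2,k)$, and $\Inn_{A_0} = \Inn_B = \phi$.

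Having produced $A_0 \in \Gl(2,k)$, I would finish by setting $\alpha := \det(A_0) \in k^*$ and $A := A_0/\sqrt{\alpha}$. Then $\det A = \alpha/\alpha = 1$, so $A \in \Sl(2,k[\sqrt{\alpha}])$, and $\Inn_A = \Inn_{A_0} = \phi$. Each entry of $A$ has the form $a/\sqrt{\alpha} = (a/\alpha)\sqrt{\alpha}$ with $a \in k$ and $a/\alpha \in k$, so every entry of $A$ is a $k$-multiple of $\sqrt{\alpha}$, as required.

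The only substantive obstacle is the descent step from $\Gl(2,\overline{k})$ down to $\Gl(2,k)$; the remainder is routine bookkeeping with determinants. If one prefers to avoid Hilbert 90 (the surrounding paper is otherwise elementary), an alternative is to write $B$ with unknown entries in $\overline{k}$, conjugate by $B$ a small set of explicit generators of $\Sl(2,k)$ such as the standard upper-triangular unipotent and a non-central diagonal matrix, and then solve the resulting polynomial relations for the entries up to an overall scalar; this is more computational but essentially unpacks the same cocycle argument.
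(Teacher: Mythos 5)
Your argument is correct and follows the same two-step structure as the paper's proof: first descend $B$ to $\Gl(2,k)$, then rescale by $\det(B)^{-1/2}$ to land in $\Sl(2,k[\sqrt{\alpha}])$ with $\alpha=\det(B)$. The only difference is that the paper simply cites Lemma~4 of \cite{HW02} for the descent step, whereas you supply the Galois-cocycle and Hilbert~90 argument that underlies that lemma; the final normalization is identical.
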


\begin{proof}

Let $\phi$ be an automorphism of $\Sl(2,k)$. We can write $\phi = \Inn_B$ for some $B \in \Gl(2,\overline{k})$. It follows from Lemma 4 of \cite{HW02} that we can assume that $B \in \Gl(2,k)$. Let $A = (\det(B))^{-\frac{1}{2}}B$ and $\alpha = \det(B)$. Note that $\alpha \in k$. By construction, we see that $\det(A) = 1$ and that the entries of $A$ are $k$-multiples of $\sqrt{\alpha}$. \end{proof}

We now consider a lemma which characterizes matrices in $\Sl(2, \overline{k})$.

\begin{lem}
Suppose $A \in \Sl(2,\overline{k})$. Then $A$ is of the form 
$$A = \left(\begin{array}{cc}a & b \\-\frac{m_A(a)}{b} & -a+\lambda_1+\lambda_2\end{array}\right)$$
or 
$$A = \left(\begin{array}{cc}\lambda_1 & 0 \\c & \lambda_2\end{array}\right)$$
where $\lambda_1$ and $\lambda_2$ are the eigenvalues of $A$, and $m_A(x)$ is the minimal polynomial of $A$.
\end{lem}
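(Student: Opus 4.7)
The plan is to write $A = \begin{pmatrix} a & b \\ c & d \end{pmatrix}$ with $\det(A)=ad-bc=1$, and split into two cases according to whether the upper-right entry $b$ vanishes.

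If $b = 0$, then $A$ is lower triangular, so its eigenvalues are precisely the diagonal entries $a$ and $d$. Setting $\lambda_1 = a$ and $\lambda_2 = d$ puts $A$ in the second form displayed in the statement.

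If $b \neq 0$, then $A$ cannot be a scalar matrix, so its minimal polynomial $m_A(x)$ has degree two and therefore coincides with the characteristic polynomial of $A$. Because $\det(A) = \lambda_1 \lambda_2 = 1$ and $\Tr(A) = \lambda_1 + \lambda_2 = a + d$, the minimal polynomial takes the form
\[
m_A(x) = x^2 - (\lambda_1+\lambda_2)x + 1.
\]
Solving $\Tr(A) = a+d = \lambda_1+\lambda_2$ gives $d = -a + \lambda_1+\lambda_2$, which matches the lower-right entry in the first form. Then from $\det(A) = ad - bc = 1$ we obtain
\[
bc = ad - 1 = a(-a + \lambda_1 + \lambda_2) - 1 = -\bigl(a^2 - (\lambda_1+\lambda_2)a + 1\bigr) = -m_A(a),
\]
so that $c = -m_A(a)/b$, which is well-defined since $b\neq 0$. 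This places $A$ in the first form.

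The two cases together exhaust $\Sl(2,\overline{k})$, so the proof is complete. The only subtlety is the remark that in the case $b \neq 0$ the minimal polynomial really does equal the characteristic polynomial; this is used implicitly in the identification of $-bc$ with $-m_A(a)$, but it is justified by noting that a $2\times 2$ matrix whose minimal polynomial has degree one must be scalar, forcing $b=0$.
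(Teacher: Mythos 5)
Your proof is correct and follows essentially the same route as the paper: case-split on whether $b=0$, use the trace to get the $(2,2)$ entry, and use $\det(A)=1$ to solve for $c=-m_A(a)/b$. Your added remark that $b\neq 0$ forces $A$ to be non-scalar, so that $m_A$ genuinely equals the characteristic polynomial, is a small point of rigor the paper passes over silently.
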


\begin{proof}
If $A$ is diagonal, then $A$ is in the latter form where $c = 0$. We may assume $A$ is not diagonal and write $A =  \left(\begin{array}{cc}a & b \\c & d\end{array}\right)$. We first assume that $b$ is nonzero. We need only show that $c = -\frac{m_A(a)}{b}$ and $d = -a+\lambda_1+\lambda_2$. The latter is clear since the the trace of $A$ is $a+d = \lambda_1+\lambda_2$. So we are only concerned with $c$.

Note that $m_A(x) = x^2-\Tr(A)x+\det(A) = x^2-(\lambda_1+\lambda_2)x+1$ since $A$ is a $2 \times 2$ matrix. Now, to find the value of $c$, recall that $ad-bc =1$. Thus,

$$1 = a(-a+\lambda_1+\lambda_2)-bc,$$

which implies that 

$$bc = -a^2+(\lambda_1+\lambda_2)a-1.$$

Since $b$ is nonzero, we have that $c = -\frac{m_A(a)}{b}$. 

We now suppose $b = 0$, then $A$ is lower triangular and its diagonal entries must be its eigenvalues. Thus, $A = \left(\begin{array}{cc}\lambda_1 & 0 \\c & \lambda_2\end{array}\right)$.
\end{proof}

We can summarize the previous two lemmas into a characterization of the matrices $A \in \Sl(2,k[\sqrt{\alpha}])$ that define order $m$ automorphisms  of $\Sl(2,k)$. 

\begin{theorem}
\label{CanonThm}

Suppose $\Inn_A$ is an order $m$ automorphism of $\Sl(2,k)$ where $A \in \Sl(2,k[\sqrt{\alpha}])$, $\alpha \in k$, and each entry of $A$ is a $k$-multiple of $\sqrt{\alpha}$. Then, 

$$A = \left(\begin{array}{cc}a & b \\-\frac{m_A(a)}{b} & -a+\lambda_1+\lambda_2\end{array}\right)$$

or 

$$A = \left(\begin{array}{cc}\lambda_1 & 0 \\c & \lambda_2\end{array}\right)$$

where $\lambda_1$ and $\lambda_2$ are the eigenvalues of $A$, and $m_A(x)$ is the minimal polynomial of $A$.

\end{theorem}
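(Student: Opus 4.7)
The statement is essentially a merging of the two preceding lemmas, so my plan is to just glue them together rather than redo any calculation. Given that $\Inn_A$ is an order $m$ automorphism of $\Sl(2,k)$, the automorphism $\Inn_A$ is in particular an automorphism of $\Sl(2,k)$, so Lemma \ref{kMult} applies and gives us a replacement (call it the same $A$) in $\Sl(2,k[\sqrt{\alpha}])$ with each entry a $k$-multiple of $\sqrt{\alpha}$ for some $\alpha \in k$. This handles the entry/field constraints stipulated in the hypothesis.

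Next I would observe that since $k[\sqrt{\alpha}] \subseteq \overline{k}$, the matrix $A$ sits inside $\Sl(2,\overline{k})$, so the structural Lemma immediately preceding the theorem applies. That lemma gives exactly the case split: either $A$ is (conjugate-free) non-lower-triangular, in which case picking $b \neq 0$ forces
\[
A = \begin{pmatrix} a & b \\ -\frac{m_A(a)}{b} & -a + \lambda_1 + \lambda_2 \end{pmatrix},
\]
or $A$ has $b=0$, in which case $A$ is lower triangular and its diagonal entries must be its eigenvalues, giving the second displayed form. Since $\lambda_1,\lambda_2$ and $m_A(x)$ are intrinsic to $A$, these identifications carry over unchanged.

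So the proof is literally: apply Lemma \ref{kMult} to fix the ambient ring, then apply the second lemma for the matrix shape, then note that the conclusions are compatible (neither lemma constrains what the other produces). There is no real obstacle here; the only thing worth remarking on is that the hypothesis that $\Inn_A$ has finite order $m$ plays no role in this particular statement beyond ensuring $A$ is a well-defined element of $\Sl(2,\overline{k})$ with well-defined eigenvalues and minimal polynomial. In other words, I would keep the proof to a few sentences, citing the two lemmas and noting that combining their conclusions gives the claim.
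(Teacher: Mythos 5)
Your proposal is correct and matches the paper's intent exactly: the paper gives no separate proof, explicitly stating that the theorem "summarizes the previous two lemmas," which is precisely your gluing argument. The only minor observation is that Lemma \ref{kMult} is really only needed to justify that the hypotheses of the theorem can always be arranged; once $A$ is assumed to lie in $\Sl(2,k[\sqrt{\alpha}]) \subseteq \Sl(2,\overline{k})$, the structural lemma alone yields the two displayed forms.
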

%%%%%%%%%%%%%%%%%

\section{Isomorphy Classes of Order $m$ Automorphisms}

In this section, we find conditions on the matrices $A$ and $B$ that determine whether or not $\Inn_A$ and $\Inn_B$ are isomorphic over $\Gl(2,k)$. We begin with a lemma that translates the isomorphy conditions from one about mappings to one about matrices.

\begin{lem}
\label{TidyLem}
Assume $\Inn_A$ and $\Inn_B$ are order $m$ automorphisms of $\Sl(2,k)$. Further, suppose $A$ lies in $\Sl(2,k[\sqrt{\alpha}])$ where each entry of $A$ is a $k$-multiple of $\sqrt{\alpha}$, $B$ lies in $\Sl(2,k[\sqrt{\gamma}])$ where each entry of $B$ is a $k$-multiple of $\sqrt{\gamma}$, where $\alpha, \gamma \in k$. Then $\Inn_A$ and $\Inn_B$ are isomorphic over $\Gl(2,k)$ if and only if there exists $Q \in \Gl(2,k)$ such that $Q^{-1}AQ = B$ or $-B$.
\end{lem}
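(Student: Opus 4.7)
The plan is to unwind the definition of isomorphy and reduce to a statement about the centralizer of $\Sl(2,k)$ in the $2\times 2$ matrix ring over a field containing the entries of $A$ and $B$.

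First I would translate the abstract condition into matrix form. By Definition~\ref{isoinv}, $\Inn_A$ and $\Inn_B$ are isomorphic over $\Gl(2,k)$ iff there exists $P\in\Gl(2,k)$ with $\Inn_B = \Inn_P^{-1}\circ \Inn_A\circ \Inn_P$ as automorphisms of $\Sl(2,k)$. A direct calculation shows that for any $X\in\Sl(2,k)$,
$$\Inn_P^{-1}(\Inn_A(\Inn_P(X))) = P A^{-1} P^{-1} X P A P^{-1} = (PAP^{-1})^{-1} X (PAP^{-1}),$$
so the condition becomes $\Inn_B = \Inn_{PAP^{-1}}$ as automorphisms of $\Sl(2,k)$.

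Next I would show that for $X,Y\in\Sl(2,L)$ with $L$ any extension of $k$, the inner automorphisms $\Inn_X$ and $\Inn_Y$ of $\Sl(2,k)$ coincide iff $X=\pm Y$. The equality means that $C:=X Y^{-1}$ commutes with every element of $\Sl(2,k)$. Testing $C$ against the two matrices $\left(\begin{smallmatrix}1 & 1\\0 & 1\end{smallmatrix}\right)$ and $\left(\begin{smallmatrix}0 & 1\\-1 & 0\end{smallmatrix}\right)$ of $\Sl(2,k)$ and solving the resulting linear equations forces $C$ to be a scalar matrix $\lambda I$. Since $\det C = \det X/\det Y = 1$, we get $\lambda^2=1$, and because $\cha(k)\ne 2$ this gives $\lambda = \pm 1$, hence $X = \pm Y$. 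The converse direction is immediate since $\Inn_X = \Inn_{-X}$.

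Combining these two steps, $\Inn_B = \Inn_{PAP^{-1}}$ on $\Sl(2,k)$ iff $B = \pm PAP^{-1}$. Setting $Q:=P^{-1}\in\Gl(2,k)$ rewrites this as $Q^{-1}AQ = \pm B$, that is, $Q^{-1}AQ = B$ or $-B$, which is exactly the desired condition; conversely, given such a $Q$, take $P=Q^{-1}$ to recover the isomorphy. The main (but still routine) obstacle is the centralizer computation in the second step — verifying that conjugation by a matrix living in the larger ring $k[\sqrt{\alpha},\sqrt{\gamma}]$ nevertheless forces that matrix to be a $k$-scalar — but this falls out of the explicit commutation relations, and the characteristic hypothesis is precisely what secures the two-element sign ambiguity $\pm 1$.
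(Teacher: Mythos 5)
Your proof is correct and follows essentially the same route as the paper: the forward direction is the same direct conjugation computation, and the converse reduces to showing that $Q^{-1}AQB^{-1}$ centralizes $\Sl(2,k)$ and hence equals $\pm I$. The only (harmless) difference is that you compute the centralizer directly inside $\Sl(2,k)$ by testing two explicit matrices, whereas the paper first extends the identity to $\Sl(2,\overline{k})$ and invokes the center of $\Sl(2,\overline{k})$; your variant is, if anything, slightly more self-contained.
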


\begin{proof}
 First assume there exists $Q \in \Gl(2,k)$ such that $Q^{-1}AQ = B$ or $-B$. Then for all $U \in \Sl(2,k)$, we have 
\begin{align*}
\Inn_Q\Inn_A \Inn_{Q^{-1}} (U) &= Q^{-1}A^{-1}QUQ^{-1}AQ\\ 
&= (Q^{-1}AQ)^{-1}U(Q^{-1}AQ)\\
&= (\pm B)^{-1}U(\pm B)\\ 
&= B^{-1}UB\\ 
&= \Inn_B(U).
\end{align*}
So, $\Inn_Q\Inn_A \Inn_{Q^{-1}} =\Inn_B$ and $\Inn_A$ and $\Inn_B$ are isomorphic over $\Gl(2,k)$.

To prove the converse, we now assume that $\Inn_A$ and $\Inn_B$ are isomorphic over $\Gl(2,k)$. Then there exists $Q \in \Gl(2,k)$ such that $\Inn_Q\Inn_A \Inn_{Q^{-1}} =\Inn_B$. We note that $\Inn_A$ and $\Inn_B$ are also automorphisms of $\Sl(2,\overline{k})$. For all $U \in \Sl(2,\overline{k})$, we have 

$$Q^{-1}A^{-1}QUQ^{-1}AQ = B^{-1}UB,$$

which implies 

$$BQ^{-1}A^{-1}QUQ^{-1}AQB^{-1} = U.$$

So, $Q^{-1}AQB^{-1} $ commutes with all elements of $\Sl(2,\overline{k})$. We note that $Q^{-1}AQB^{-1}  \in \Sl(2,\overline{k})$, so $Q^{-1}AQB^{-1} $ must lie in the center of $\Sl(2,\overline{k})$, which is $\{ I, -I \}$. Thus $Q^{-1}AQ = B$ or $-B$. 
\end{proof}

Note that $\Inn_A$ and $\Inn_B$ will be isomorphic only if $A$ and $B$ have entries in the same quadratic extension of $k$.

\begin{lem}
\label{SquareClassLem}
Assume $\Inn_A$ and $\Inn_B$ are order $m$ automorphisms of $\Sl(2,k)$, $A$ lies in $\Sl(2,k[\sqrt{\alpha}])$ where each entry of $A$ is a $k$-multiple of $\sqrt{\alpha}$, and $B$ lies in $\Sl(2,k[\sqrt{\gamma}])$ where each entry of $B$ is a $k$-multiple of $\sqrt{\gamma}$, where $\alpha, \gamma \in k$. If $\Inn_A$ and $\Inn_B$ are isomorphic over $\Gl(2,k)$, then $\gamma = c\alpha$. That is, $\alpha$ and $\gamma$ lie in the same square class of $k$, and all of the entries of $B$ are $k$-multiples of $\sqrt{\alpha}$.
\end{lem}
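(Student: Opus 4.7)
The plan is to use Lemma \ref{TidyLem} to convert the statement into a concrete matrix equation, then isolate the scalars $\sqrt{\alpha}$ and $\sqrt{\gamma}$ and compare entries over $k$. By Lemma \ref{TidyLem}, the isomorphy hypothesis yields some $Q \in \Gl(2,k)$ with $Q^{-1}AQ = \pm B$. Since every entry of $A$ is a $k$-multiple of $\sqrt{\alpha}$, I can factor $A = \sqrt{\alpha}\,A'$ with $A' \in \M(2,k)$, and likewise $B = \sqrt{\gamma}\,B'$ with $B' \in \M(2,k)$. Note that $A' \ne 0$ and $B' \ne 0$, because $A$ and $B$ are invertible, and also $\alpha, \gamma \in k^*$ for the same reason.

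Next, setting $C = Q^{-1}A'Q$, which lies in $\M(2,k)$ since $Q, Q^{-1}, A'$ all do, the relation $Q^{-1}AQ = \pm B$ becomes
$$\sqrt{\alpha}\,C \;=\; \pm \sqrt{\gamma}\,B' \qquad \text{in } \M(2,\overline{k}).$$
I would then select a position $(i,j)$ at which $B'_{ij} \ne 0$; such a position exists because $B' \ne 0$. Reading off the $(i,j)$-entry gives $\sqrt{\alpha}\,C_{ij} = \pm \sqrt{\gamma}\,B'_{ij}$. Since the right-hand side is nonzero (using $\gamma \ne 0$), we must have $C_{ij} \ne 0$, and therefore
$$\frac{\sqrt{\gamma}}{\sqrt{\alpha}} \;=\; \pm\frac{C_{ij}}{B'_{ij}} \;\in\; k^*.$$

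Squaring both sides, $\gamma/\alpha \in (k^*)^2$, so $\gamma = c\alpha$ for some $c \in (k^*)^2$, which is exactly the statement that $\alpha$ and $\gamma$ lie in the same square class of $k$. Writing $\sqrt{\gamma} = d\sqrt{\alpha}$ for some $d \in k^*$, any entry $B_{ij} = b_{ij}\sqrt{\gamma} = (b_{ij}d)\sqrt{\alpha}$ with $b_{ij} \in k$ is visibly a $k$-multiple of $\sqrt{\alpha}$, giving the second claim.

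The argument is essentially bookkeeping; the only real delicacy is ensuring that the ratio $\sqrt{\gamma}/\sqrt{\alpha}$ can be taken in $\overline{k}$ and shown to lie in $k$. This is where the choice of a position $(i,j)$ at which $B'_{ij} \ne 0$ matters, and where the nonvanishing of $B'$, forced by $\det B = 1$, carries the weight of the proof. I do not anticipate any other obstacle.
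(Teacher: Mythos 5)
Your proposal is correct and follows the same route as the paper: invoke Lemma \ref{TidyLem} to obtain $Q \in \Gl(2,k)$ with $Q^{-1}AQ = \pm B$, and then deduce the square-class claim. The paper simply states that ``the result follows'' from this matrix equation, and your factorization $A = \sqrt{\alpha}\,A'$, $B = \sqrt{\gamma}\,B'$ together with the comparison of a nonzero entry is a correct and complete filling-in of that omitted step.
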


\begin{proof}
By Lemma \ref{TidyLem}, there exists $Q \in \Gl(2,k)$ such that $Q^{-1}AQ = B$ or $-B$ and the result follows.
\end{proof}

Using the previous theorem and lemmas, we can now characterize isomorphy classes of order $m$ automorphisms of $\Sl(2,k)$.

\begin{theorem}
\label{IsomorphyThm}
Suppose $\Inn_A$ and $\Inn_B$ are order $m$ automorphisms of $\Sl(2,k)$ where $A$ and $B \in \Sl(2,k[\sqrt{\alpha}])$ for some $\alpha \in k$ where each entry of $A$ and $B$ is a $k$-multiple of $\sqrt{\alpha}$.
\begin{enumerate}[(a)]
\item If $A$ and $B$ have the same eigenvalues, $\lambda_1$ and $\lambda_2$, then, $\Inn_A$ and $\Inn_B$ are isomorphic over $\Gl(2,k)$.
\item If $A$ has eigenvalues $\lambda_1$ and $\lambda_2$ and $B$ has eigenvalues $-\lambda_1$ and $-\lambda_2$, then $\Inn_A$ and $\Inn_B$ are isomorphic over $\Gl(2,k)$.
\item If $\Inn_A$ is isomorphic to $\Inn_B$ over $\Gl(2,k)$, then $A$ has the same eigenvalues as $B$ or $-B$.
\end{enumerate}
\end{theorem}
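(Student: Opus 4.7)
The plan is to leverage Lemma \ref{TidyLem}, which converts the isomorphy of $\Inn_A$ and $\Inn_B$ over $\Gl(2,k)$ into the existence of a $Q \in \Gl(2,k)$ with $Q^{-1}AQ = B$ or $-B$, and then to reduce the remaining matrix question to the rational canonical form of $2 \times 2$ matrices over $k$.

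For part (a), I would first factor out the common scalar $\sqrt{\alpha}$: since every entry of $A$ and every entry of $B$ is a $k$-multiple of $\sqrt{\alpha}$, we may write $A = \sqrt{\alpha}\,A_0$ and $B = \sqrt{\alpha}\,B_0$ with $A_0, B_0 \in \M(2,k)$. The condition $\det(A)=\det(B)=1$ forces $\det(A_0) = \det(B_0) = 1/\alpha$, and the shared trace $\Tr(A)=\Tr(B)=\lambda_1+\lambda_2$ gives $\Tr(A_0) = \Tr(B_0) = (\lambda_1+\lambda_2)/\sqrt{\alpha}$, which lies in $k$ because $\Tr(A)$ is itself a $k$-multiple of $\sqrt{\alpha}$. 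Hence $A_0$ and $B_0$ share the same characteristic polynomial over $k$. Next I would observe that neither $A_0$ nor $B_0$ can be scalar: if $A_0 = cI$, then $A = c\sqrt{\alpha}\,I$ is central in $\Sl(2,k[\sqrt{\alpha}])$, forcing $\Inn_A = \id$ and contradicting the standing hypothesis $m \ge 2$. Since two non-scalar $2\times 2$ matrices over $k$ with the same characteristic polynomial are both $\Gl(2,k)$-conjugate to the companion matrix of that polynomial, there is a $Q \in \Gl(2,k)$ with $Q^{-1}A_0 Q = B_0$; multiplying through by $\sqrt{\alpha}$ yields $Q^{-1}AQ = B$, and Lemma \ref{TidyLem} closes the argument.

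For part (b), I would note that $-B$ has eigenvalues $\lambda_1, \lambda_2$, still lies in $\Sl(2,k[\sqrt{\alpha}])$ with each entry a $k$-multiple of $\sqrt{\alpha}$, and that $\Inn_{-B} = \Inn_B$ remains of order $m$ because $-I$ is central in $\Sl(2,k)$. Applying the argument of part (a) to the pair $(A,-B)$ produces $Q \in \Gl(2,k)$ with $Q^{-1}AQ = -B$, and the ``$-B$'' branch of Lemma \ref{TidyLem} then gives $\Inn_A \cong \Inn_B$ over $\Gl(2,k)$.

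Part (c) is the easy direction and follows immediately from Lemma \ref{TidyLem}: an isomorphism between $\Inn_A$ and $\Inn_B$ over $\Gl(2,k)$ supplies some $Q \in \Gl(2,k)$ with $Q^{-1}AQ = B$ or $Q^{-1}AQ = -B$, and similar matrices have identical characteristic polynomials, so $A$ must share the eigenvalues of $B$ in the first case and the eigenvalues of $-B$ in the second. The only delicate point I anticipate is the rational canonical form step in part (a): one must verify that two non-scalar matrices in $\M(2,k)$ sharing a characteristic polynomial are $\Gl(2,k)$-conjugate, and pin down carefully that the order-$m$ hypothesis with $m \ge 2$ genuinely rules out the scalar case for $A_0$ and $B_0$.
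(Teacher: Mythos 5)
Your proof is correct, but it takes a genuinely different route from the paper's. The paper proves part (a) by an explicit case analysis: it splits on whether the eigenvalues are $k$-multiples of $\sqrt{\alpha}$ and on whether $A$ and $B$ are lower triangular, and in each case writes down concrete diagonalizing matrices $Q_A$, $Q_B$ (built from the entries of $A$ and $B$ via the canonical forms of Theorem \ref{CanonThm}) whose product $Q = Q_A Q_B^{-1}$ is checked by hand to lie in $\Gl(2,k)$ and to conjugate $A$ to $B$. You instead factor out the scalar $\sqrt{\alpha}$ to land in $\M(2,k)$, observe that $A_0$ and $B_0$ share a characteristic polynomial over $k$ and are non-scalar (the order-$m$ hypothesis with $m \ge 2$ does rule out the scalar case, exactly as you say), and invoke the rational canonical form: a non-scalar $2\times 2$ matrix has minimal polynomial equal to its characteristic polynomial, so both are $\Gl(2,k)$-conjugate to the same companion matrix. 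This is uniform --- no case split on triangularity or on where the eigenvalues live --- and it isolates the one piece of real content, namely that non-scalar $2\times 2$ matrices over $k$ are classified up to $\Gl(2,k)$-conjugacy by trace and determinant; the reduction to $\M(2,k)$ before invoking canonical forms is the right move, since conjugacy over $k[\sqrt{\alpha}]$ would not immediately hand you a conjugator in $\Gl(2,k)$. What the paper's approach buys in exchange is an explicit conjugating matrix, which can be useful for computations. Your parts (b) and (c) match the paper's essentially verbatim.
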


\begin{proof}
\begin{enumerate}[(a)]
\item We consider two cases based on if $\lambda_1$ and $\lambda_2$ are $k$-multiples of $\sqrt{\alpha}$.

{\bf Case 1:}  If $\lambda_1$ and $\lambda_2$ are not $k$-multiples of $\sqrt{\alpha}$, then both $A$ and $B$ must not be lower triangular. We can assume 

$$A = \left(\begin{array}{cc}a & b \\-\frac{m_A(a)}{b} & -a+\lambda_1+\lambda_2\end{array}\right)$$ and  $$B = \left(\begin{array}{cc}c & d \\-\frac{m_A(c)}{d} & -c+\lambda_1+\lambda_2\end{array}\right).$$ 

Then for

$$Q_A = \left(\begin{array}{cc}b & b \\\lambda_1-a & \lambda_2-a\end{array}\right) \in \Gl(2,\overline{k}),$$

we have 

$$Q_A^{-1}AQ_A = \left(\begin{array}{cc}\lambda_1 & 0 \\0 & \lambda_2\end{array}\right).$$
 
 Likewise, if we let
 $$Q_B = \left(\begin{array}{cc}d & d \\\lambda_1-c & \lambda_2-c\end{array}\right) \in \Gl(2,\overline{k}),$$

it follows that

$$Q_B^{-1}BQ_B = \left(\begin{array}{cc}\lambda_1 & 0 \\0 & \lambda_2\end{array}\right).$$

Let 

$$Q = Q_AQ_B^{-1} = \left(\begin{array}{cc}\frac{b}{d} & 0 \\\frac{c-a}{d} & 1\end{array}\right).$$

Note that $Q^{-1}AQ = B$ and that $Q \in \Gl(2,k)$. Using the result of Lemma \ref{TidyLem}, we have shown that $\Inn_A$ and $\Inn_B$ are isomorphic over $\Gl(2,k)$.

{\bf Case 2:}  Let $\lambda_1$ and $\lambda_2$ be $k$-multiples of $\sqrt{\alpha}$ and define $D =  \left(\begin{array}{cc}\lambda_1 & 0 \\0 & \lambda_2\end{array}\right)$. In this case, it is possible but not necessary that $A$ and $B$ are lower triangular. If neither are triangular, then the argument from Case 1 shows that $\Inn_A$ and $\Inn_B$ are isomorphic over $\Gl(2,k)$, as desired. Assume that $A$ and $B$ are lower triangular. We write

$$A = \left(\begin{array}{cc}\lambda_1 & 0 \\c & \lambda_2\end{array}\right).$$

From Lemma \ref{kMult}, we know that $\lambda_1, \lambda_2$, and $c$ are $k$-multiples of $\sqrt{\alpha}$. Let

$$Q_A = \left(\begin{array}{cc} \frac{\lambda_1-\lambda_2}{c} & 0 \\ 1& 1\end{array}\right) \in \Gl(2,k)$$

then

$$Q_A^{-1}AQ_A = \left(\begin{array}{cc}\lambda_1 & 0 \\0 & \lambda_2\end{array}\right) = D.$$ 

Since $A$ induces an order $m$ automorphism of $\Sl(2,k)$, $D= \left(\begin{array}{cc}\lambda_1 & 0 \\0 & \lambda_2\end{array}\right)$ must induce an order $m$ automorphism of $\Sl(2,k)$, $\Inn_D$. We have shown that $\Inn_D$ is isomorphic over $\Gl(2,k)$ to $\Inn_A$ by Lemma \ref{TidyLem}.

If $B$ is lower triangular as well, then we can show that $\Inn_B$ is isomorphic to the automorphism induced by $\Inn_D$. By transitivity of isomorphy, $\Inn_A$ is isomorphic to $\Inn_B$ over $\Gl(2,k)$.

The only case left to consider is when $A$ is not lower triangular, but $B$ is lower triangular. It suffices to show that $\Inn_A$ is isomorphic over $\Gl(2,k)$ to $\Inn_D$, since we have already shown $\Inn_B$ is isomorphic to $\Inn_D$. We again consider 

$$A = \left(\begin{array}{cc}a & b \\-\frac{m_A(a)}{b} & -a+\lambda_1+\lambda_2\end{array}\right) \in \Sl(2,k[\sqrt{\alpha}])$$ and   $$Q_A = \left(\begin{array}{cc}b & b \\\lambda_1-a & \lambda_2-a\end{array}\right) \in \Gl(2,\overline{k}),$$

where

$$Q_A^{-1}AQ_A = \left(\begin{array}{cc}\lambda_1 & 0 \\0 & \lambda_2\end{array}\right) = D.$$

Let $Q_2 = \sqrt{\alpha}Q_A.$ Since all of the entries of $Q_A$ are $k$-multiples of $\sqrt{\alpha}$, it follows that $Q_2 \in \Gl(2,k)$. We can see that $Q_2^{-1}AQ_2 = \left(\begin{array}{cc}\lambda_1 & 0 \\0 & \lambda_2\end{array}\right) = D,$ and therefore $\Inn_A$ is isomorphic to $\Inn_D$ by Lemma \ref{TidyLem}. 

\item Suppose $A$ has eigenvalues $\lambda_1$ and $\lambda_2$ and $B$ has eigenvalues $-\lambda_1$ and $-\lambda_2$. Observe that $A$ and $-B$ have the same eigenvalues. From the proof of $(a)$, we know that $\Inn_A$ is isomorphic to $\Inn_{-B}$. Since $\Inn_B = \Inn_{-B}$, we are done.

\item Suppose $\Inn_A$ is isomorphic to $\Inn_B$ over $\Gl(2,k)$. By Lemma \ref{TidyLem}, there exists $Q \in \Gl(2,k)$ such that $Q^{-1}AQ = B$ or $-B$.

\end{enumerate}
\end{proof}

We summarize the results of this theorem in the following corollary.

\begin{cor}
\label{IsomorphyCor}
Suppose $\Inn_A$ and $\Inn_B$ are order $m$ automorphisms of $\Sl(2,k)$ where $A$ and $B \in \Sl(2,k[\sqrt{\alpha}])$ for some $\alpha \in k$ and each entry of $A$ and $B$ is a $k$-multiple of $\sqrt{\alpha}$. Then $\Inn_A$ is isomorphic to $\Inn_B$ over $\Gl(2,k)$ if and only if $A$ has the same eigenvalues as $B$ or $-B$.
\end{cor}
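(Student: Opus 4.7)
The plan is to recognize that this corollary is essentially a repackaging of Theorem \ref{IsomorphyThm}, whose three parts together already cover both directions of the claimed equivalence. So my proof will simply be to bundle parts (a), (b), (c) into an "if and only if" statement, verifying that no case is dropped in either direction.

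For the forward direction, I would start from the hypothesis that $\Inn_A$ and $\Inn_B$ are isomorphic over $\Gl(2,k)$. Part (c) of Theorem \ref{IsomorphyThm} (via Lemma \ref{TidyLem}) provides $Q \in \Gl(2,k)$ with $Q^{-1}AQ = B$ or $Q^{-1}AQ = -B$. Since conjugation preserves the characteristic polynomial and hence the spectrum, $A$ and $B$ have identical eigenvalues in the first subcase, and $A$ and $-B$ have identical eigenvalues in the second subcase. This is exactly the eigenvalue conclusion asserted by the corollary.

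For the reverse direction, I would split on which of the two eigenvalue matchings holds. If $A$ and $B$ have the same eigenvalues $\lambda_1, \lambda_2$, then part (a) of Theorem \ref{IsomorphyThm} directly gives $\Inn_A \cong \Inn_B$ over $\Gl(2,k)$. If instead $A$ and $-B$ share the same eigenvalues $\lambda_1, \lambda_2$, then $B$ has eigenvalues $-\lambda_1, -\lambda_2$, which is precisely the hypothesis of part (b), and that part again yields $\Inn_A \cong \Inn_B$ over $\Gl(2,k)$.

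There is no real obstacle here; the mathematical content is already contained in the previous theorem, and the corollary is a clean consolidation. The only thing worth being careful about is to check that the two subcases "$A$ shares eigenvalues with $B$" and "$A$ shares eigenvalues with $-B$" line up exactly with parts (a) and (b) respectively, which they do because $-B$ has eigenvalues equal to the negatives of those of $B$.
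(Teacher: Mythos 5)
Your proof is correct and matches the paper's approach: the paper presents this corollary explicitly as a summary of Theorem \ref{IsomorphyThm}, with parts (a) and (b) giving the reverse direction and part (c) (plus the fact that conjugation preserves eigenvalues) giving the forward direction. Your only addition is to spell out the eigenvalue-preservation step in the forward direction, which the paper leaves implicit.
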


%%%%%%%%%%%%%%%%%%%%%%%

\section{$m$-Valid Eigenpairs}

In the previous section, we reduced the problem of isomorphy to a problem of eigenvalues and quadratic extensions. In this section, we consider the valid pairs of eigenvalues of a matrix $A$ that could induce an automorphism of order $m$.

\begin{definit}
We call the pair $\lambda_1$, $\lambda_2 \in \overline{k}$ an {\it $m$-valid eigenpair} if $\Inn_A$ is an order $m$ automorphism of $\Sl(2, \overline{k})$ where $A = \left[\begin{array}{cc}\lambda_1 & 0 \\0 & \lambda_2\end{array}\right] \in \Sl(2,\overline{k})$.
\end{definit}

In the following two lemmas we characterize the matrices $B$ where $\Inn_B$ acts as the identity on $\Sl(2,k)$. 

\begin{lem}\label{helmwu}
Suppose $\Inn_B$ for $B \in \Gl(n,\overline{k})$ acts as the identity on $\Sl(2,k)$. Then $B = c I$ for some $c\in \overline{k}$.
\end{lem}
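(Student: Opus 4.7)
The plan is to translate the hypothesis into a commutation condition and then use two well-chosen test matrices in $\Sl(2,k)$ to pin down the form of $B$.

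First, I would observe that $\Inn_B$ acting as the identity on $\Sl(2,k)$ means exactly that $B^{-1}XB = X$, i.e., $BX = XB$, for every $X \in \Sl(2,k)$. So the question reduces to computing the centralizer of $\Sl(2,k)$ inside $\Gl(2,\overline{k})$ and showing it consists only of scalar matrices.

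To carry this out, I would pick the two elementary unipotent matrices
$$U = \begin{pmatrix} 1 & 1 \\ 0 & 1 \end{pmatrix}, \qquad L = \begin{pmatrix} 1 & 0 \\ 1 & 1 \end{pmatrix},$$
both of which lie in $\Sl(2,k)$ for any field $k$ (the characteristic $\ne 2$ hypothesis is not even needed here). Writing $B = \begin{pmatrix} a & b \\ c & d \end{pmatrix}$, the equation $BU = UB$ expands to a system of four scalar equations that immediately force $c = 0$ and $a = d$. Imposing $BL = LB$ in the same way forces $b = 0$ and $a = d$. Combining these gives $B = aI$ with $a \in \overline{k}$, which is the desired conclusion.

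There is no real obstacle: the argument is a direct two-matrix computation once the hypothesis is rephrased as commutation. The only thing worth noting is that we need $B \in \Gl(2,\overline{k})$ (so the entries really are well-defined elements of $\overline{k}$), but invertibility itself plays no role beyond ensuring $a \ne 0$, which follows automatically from $\det(B) = a^2 \ne 0$.
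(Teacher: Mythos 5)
Your proof is correct, but it differs from the paper in that the paper does not actually prove this lemma at all: its entire ``proof'' is a citation to Lemma~2 of Helminck--Wu \cite{HW02}. Your argument is a self-contained replacement. Rephrasing the hypothesis as $BX = XB$ for all $X \in \Sl(2,k)$ is exactly right, and the two test matrices $\left(\begin{smallmatrix}1&1\\0&1\end{smallmatrix}\right)$ and $\left(\begin{smallmatrix}1&0\\1&1\end{smallmatrix}\right)$ do lie in $\Sl(2,k)$ over any field; expanding $BU=UB$ gives $c=0$, $a=d$, and $BL=LB$ gives $b=0$, $a=d$, so $B=aI$ as claimed. Your observations that characteristic is irrelevant here and that invertibility is not needed for the conclusion are also accurate. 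The only cosmetic point is that the statement as printed says $B\in\Gl(n,\overline{k})$, evidently a typo for $\Gl(2,\overline{k})$, and your computation silently (and correctly) assumes the $2\times 2$ case. What your approach buys is transparency and independence from the reference; what the paper's citation buys is brevity. Either is acceptable, and your version would make the paper more self-contained.
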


\begin{proof}

This is Lemma 2 of \cite{HW02}.

\end{proof}

We can improve upon this statement since we can assume $B \in \Sl(2,\overline{k})$. We can use this idea to characterize the matrices that induce order $m$ automorphisms on $\Sl(2,k)$.

\begin{lem}
\label{EigenpairLem}
\begin{enumerate}[(a)]
\item Suppose $\Inn_B$ for $B \in \Sl(2,\overline{k})$ acts as the identity on $\Sl(2,k)$. Then $B = I$ or $B = -I$. 
\item $\Inn_A$ is an order $m$ automorphism of $\Sl(2,k)$ if and only if $m$ is the smallest integer such that $A^m = I$ or $A^m = -I$.
\end{enumerate}
\end{lem}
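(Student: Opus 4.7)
The plan is to handle (a) first since (b) will follow almost mechanically from it. For (a), I would invoke Lemma \ref{helmwu} directly: since $B \in \Sl(2,\overline{k}) \subset \Gl(2,\overline{k})$ and $\Inn_B$ acts as the identity on $\Sl(2,k)$, we know $B = cI$ for some $c \in \overline{k}$. But then $\det(B) = c^2 = 1$, so $c = \pm 1$, giving $B = I$ or $B = -I$. There is essentially no obstacle here; it is just a one-line refinement of the cited lemma using the determinant constraint.

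For part (b), I would argue both directions by translating between the order of $\Inn_A$ as an automorphism and the behavior of powers $A^j$. For the forward direction, suppose $\Inn_A$ has order $m$. Then $(\Inn_A)^m = \Inn_{A^m} = \id$, so by part (a), $A^m = I$ or $A^m = -I$. To see that $m$ is the smallest such integer, suppose for contradiction that $A^j = \pm I$ for some $1 \leq j < m$. Then $\Inn_{A^j} = (\Inn_A)^j = \id$, contradicting that $\Inn_A$ has order exactly $m$.

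For the converse, suppose $m$ is the smallest positive integer with $A^m = \pm I$. Then $(\Inn_A)^m = \Inn_{A^m} = \Inn_{\pm I} = \id$, so the order of $\Inn_A$ divides $m$. If $(\Inn_A)^j = \id$ for some $1 \leq j < m$, then $\Inn_{A^j}$ acts as the identity on $\Sl(2,k)$, and part (a) again gives $A^j = \pm I$, contradicting minimality. Hence $\Inn_A$ has order exactly $m$.

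The only conceptual subtlety — if one can even call it that — is making sure to account for the sign ambiguity in both directions, since $\Inn_I$ and $\Inn_{-I}$ are both the identity automorphism. The proof is otherwise a straightforward application of part (a) combined with the fact that $\Inn$ is a homomorphism $\Sl(2,\overline{k}) \to \aut(\Sl(2,k))$, and I expect no real obstacle.
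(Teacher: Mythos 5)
Your proposal is correct and follows essentially the same route as the paper: part (a) is the cited Lemma \ref{helmwu} plus the determinant constraint, and part (b) translates between orders of $\Inn_A$ and powers of $A$ via part (a). If anything, your converse direction is slightly more careful than the paper's, since you explicitly invoke part (a) to rule out $(\Inn_A)^j = \id$ for $j < m$ rather than asserting the minimality transfer directly.
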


\begin{proof}
\begin{enumerate}[(a)]
\item From Lemma \ref{helmwu}, we have that $B = cI$ for some $c\in \overline{k}$. Since $B \in \Sl(2,\overline{k})$, $\det(B) =1= c^2$, which means $c = \pm 1.$ 
\item If $m$ is the smallest integer such that $A^m = I$ or $A^m = -I$, then $m$ is the smallest integer such that $\Inn_{A^m} = (\Inn_A)^m$ acts as the identity on $\Sl(2,k)$, which means $\Inn_A$ is an order $m$ automorphism of $\Sl(2,k)$.

If $\Inn_A$ is an order $m$ automorphism of $\Sl(2,k)$, then $\Inn_{A^m}$ acts as the identity on $\Sl(2,k)$. $(a)$ implies that $A^m = I$ or $A^m = -I$. If there exists $r$ such that $0 \le r < m$ where $A^r = I$ or $A^r = -I$, then $\Inn_A$ is at most an order $r$ automorphism of $\Sl(2,k)$, which is a contradiction. Thus, $m$ is the smallest integer such that $A^m = I$ or $A^m = -I$.
\end{enumerate}
\end{proof}

We can characterize the $m$-valid eigenpairs.

\begin{theorem}
\label{EigenpairThm}
$\lambda_1$ and $\lambda_2$ are an $m$-valid eigenpair if and only if 
\begin{enumerate}[(a)]
\item $\lambda_1$ is a primitive $2m$-th root of unity and $\lambda_2 = \lambda_1^{2m-1}$, or 
\item $m$ is odd, $\lambda_1$ is a primitive $m$-th root of unity and $\lambda_2 = \lambda_1^{m-1}$
\end{enumerate}

\end{theorem}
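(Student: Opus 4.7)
The plan is to reduce everything to a computation about the multiplicative order of $\lambda_1$, using Lemma~\ref{EigenpairLem}(b) and the fact that a diagonal matrix in $\Sl(2,\overline{k})$ must have the form $\diag(\lambda_1, \lambda_1^{-1})$.

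First I would set $A = \diag(\lambda_1,\lambda_2)$ with $\lambda_1\lambda_2=1$ (since $A\in\Sl(2,\overline{k})$), so $\lambda_2=\lambda_1^{-1}$, and observe that $A^j = \diag(\lambda_1^j, \lambda_1^{-j})$ for every $j$. Hence $A^j = I$ iff $\lambda_1^j = 1$, and $A^j = -I$ iff $\lambda_1^j = -1$. By Lemma~\ref{EigenpairLem}(b), the pair $(\lambda_1,\lambda_2)$ is $m$-valid exactly when $m$ is the smallest positive integer with $\lambda_1^m \in \{1,-1\}$.

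Next I would let $n$ denote the multiplicative order of $\lambda_1$ in $\overline{k}^{\,*}$ (which is finite, since $\lambda_1^{2m}=1$ forces $\lambda_1$ to be a root of unity) and do a short case split on the parity of $n$. If $n$ is even, then $\lambda_1^{n/2}$ is a square root of $1$ different from $1$ (otherwise the order would divide $n/2$), so $\lambda_1^{n/2}=-1$, and one checks easily that no smaller exponent yields $\pm 1$; thus the smallest $m$ is $n/2$, giving $n = 2m$. If $n$ is odd, then any $j$ with $\lambda_1^j = \pm 1$ satisfies $\lambda_1^{2j}=1$, so $n\mid 2j$, and since $n$ is odd this forces $n\mid j$; the smallest such $j$ is $n$ itself (with $\lambda_1^n=1$), so $n=m$ and $m$ is odd.

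Finally I would translate these two cases back: in the first case $\lambda_1$ is a primitive $2m$-th root of unity and $\lambda_2=\lambda_1^{-1}=\lambda_1^{2m-1}$, which is case (a); in the second case $m$ is odd, $\lambda_1$ is a primitive $m$-th root of unity, and $\lambda_2 = \lambda_1^{-1} = \lambda_1^{m-1}$, which is case (b). For the converse, I would simply plug each of (a), (b) into $A^j = \diag(\lambda_1^j,\lambda_1^{-j})$ and verify directly that $m$ is the smallest exponent with $A^m = \pm I$. The only mildly delicate point is ruling out, in the odd case, any exponent $j<n$ with $\lambda_1^j=-1$; this is where oddness of $n$ is essential, and it is the one spot that needs care, though the argument above handles it in one line.
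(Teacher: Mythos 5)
Your proposal is correct and follows essentially the same route as the paper: reduce to Lemma~\ref{EigenpairLem}(b), note $\lambda_2=\lambda_1^{-1}$, and determine the least $m$ with $\lambda_1^m=\pm 1$. Your case split on the parity of the multiplicative order of $\lambda_1$ is in fact slightly more careful than the paper's argument, since it explicitly justifies the primitivity of $\lambda_1$ and the oddness of $m$ in case (b), points the published proof passes over quickly.
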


\begin{proof}

Let $A = \left[\begin{array}{cc}\lambda_1 & 0 \\0 & \lambda_2\end{array}\right]$. We begin by proving necessity, so assume that $\Inn_A$ is an order $m$ automorphism of $\Sl(2,\overline{k})$. We may assume that $A \in \Sl(2,\overline{k})$ by Lemma \ref{kMult}. By Lemma \ref{EigenpairLem} $(b)$, we know that $m$ is the smallest integer such that $A^m = I$ or $A^m = -I$. There are two cases to consider.

First assume that $m$ is the smallest integer that $A^m = -I$ and that $A^r \ne I$ when $0 \le r \le m$. Then $\lambda_1$ is a $2m$-th root of unity. Since $\det(A)= 1$, $\lambda_2 = \lambda_1^{2m-1}$.

Now assume that $m$ is the smallest integer such that $A^m = I$ and that $A^r \ne -I$ when $0 \le r \le m$. Then $\lambda_1$ is an $m$-th root of unity. Since $\det(A)= 1$, then $\lambda_2 = \lambda_1^{m-1}$. 

Now we prove the sufficiency of the conditions. In either case, $A \in \Sl(2, \overline{k})$ follows from the construction of $A$. Let's first assume $(a)$, then $m$ is the smallest positive integer such that $\lambda_1^m = -1 = \lambda_2^m$, and $2m$ is the smallest integer such that $\lambda_1^{2m} = -1 = \lambda_2^{2m}$. Thus, $m$ is the smallest integer such that $A^m = -I$ and $2m$ is the smallest integer such that $A^{2m} = I$. By Lemma \ref{EigenpairLem} $(b)$, $\Inn_A$ is an order $m$ automorphism of $\Sl(2, \overline{k})$.

Now assume the conditions of $(b)$. Then $m$ is the smallest integer such that $\lambda_1^m = 1 = \lambda_2^m$, and for every integer $r$ where $0 \le r < m$. We know that $\lambda_1^r \ne -1$, so $m$ is the smallest integer such that $A^m = I$, and Lemma \ref{EigenpairLem} $(b)$ tells us that $\Inn_A$ is an order $m$ automorphism of $\Sl(2, \overline{k})$.
\end{proof}

Let $\phi$ denote Euler's $\phi$-function. That is, for positive integer $m$, $\phi(m)$ is the number of integers $l$ such that $1 \le l < m$ and $\gcd(l,m) = 1$. 

\begin{cor}
\label{EigenpairCor}
For any given field $k$, there are $\phi(m)$ $m$-valid eigenpairs.
\end{cor}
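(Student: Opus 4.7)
My plan is to count the $m$-valid eigenpairs directly using Theorem~\ref{EigenpairThm}. The first observation is that in both case (a) and case (b) of that theorem the relation $\lambda_1^{2m}=1$ (respectively $\lambda_1^{m}=1$) forces $\lambda_2 = \lambda_1^{-1}$, since $\lambda_1^{2m-1}=\lambda_1^{-1}$ in case (a) and $\lambda_1^{m-1}=\lambda_1^{-1}$ in case (b). Hence every $m$-valid eigenpair takes the form $\{\lambda,\lambda^{-1}\}$ for some $\lambda\in\overline{k}$ which is either a primitive $2m$-th root of unity (case (a), available for every $m\ge 2$) or a primitive $m$-th root of unity (case (b), only when $m$ is odd).

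Next I would verify the two small facts needed for a clean count. First, cases (a) and (b) yield disjoint families of pairs, since the common multiplicative order of the two elements of the pair is an invariant of the pair: it equals $2m$ in (a) and $m$ in (b), and these are distinct. Second, for $m\ge 2$ we always have $\lambda\neq\lambda^{-1}$, because $\lambda^{2}=1$ would force $\lambda$ to have order dividing $2$, incompatible with order $2m\ge 4$ in case (a) and with odd order $m\ge 3$ in case (b). Consequently each unordered pair arises from exactly two ordered choices of the distinguished eigenvalue $\lambda_1$, namely $\lambda$ and $\lambda^{-1}$.

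Counting ordered choices of $\lambda_1$, case (a) contributes $\phi(2m)$ and case (b) contributes $\phi(m)$ when $m$ is odd (and nothing when $m$ is even). After dividing by $2$, the number of unordered $m$-valid eigenpairs is $\phi(2m)/2$ when $m$ is even and $(\phi(2m)+\phi(m))/2$ when $m$ is odd. I would finish by invoking the standard identities $\phi(2m)=2\phi(m)$ for even $m$ and $\phi(2m)=\phi(m)$ for odd $m$, both immediate consequences of $\phi(2)=1$ and the multiplicativity of Euler's $\phi$-function. Both expressions then collapse to $\phi(m)$.

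There is no genuine obstacle in this argument; the only point requiring a little care is the parity bookkeeping, together with the verification that case (a) and case (b) do not overlap and that each unordered pair is double-counted by the ordered enumeration of $\lambda_1$.
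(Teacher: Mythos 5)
Your proof is correct and follows essentially the same route as the paper: count ordered choices of the distinguished eigenvalue via Theorem~\ref{EigenpairThm}, divide by two for the unordered pairs, and simplify using the multiplicativity of $\phi$ (the paper writes $m=2^st$ and expands rather than quoting $\phi(2m)=2\phi(m)$, but this is the same computation). You are in fact slightly more careful than the paper, which asserts the double count without checking that $\lambda\neq\lambda^{-1}$ or that cases (a) and (b) are disjoint for odd $m$.
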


\begin{proof}

We consider separately the cases where $m$ is odd and even.
First, assume $m$ is even. Write $m = 2^s t$ where $s$ and $t$ are integers and $t$ is odd. If we include ordering, then there are $\phi(2m)$ such pairs. This double counts the $m$-valid eigenpairs. Thus, the number of distinct $m$-valid eigenpairs is 

\begin{align*}
\frac{\phi(2m)}{2} &= \frac{\phi(2^{s+1}t)}{2}\\
&= \frac{\phi(2^{s+1})\phi(t)}{2}\\
&= \frac{2^{s}\phi(t)}{2}\\
&= 2^{s-1}\phi(t)\\
&= \phi(2^s)\phi(t)\\
&= \phi(2^st)\\
&=\phi(m).
\end{align*}

Now suppose $m$ is odd. The eigenvalues may be primitive $m$-th or $2m$-th roots of unity. If we include ordering, there are $\phi(m)+\phi(2m)$ such pairs. Again, this double counts the $m$-valid eigenpairs. The number of distinct $m$-valid eigenpairs when $m$ is odd is 

\begin{align*}
\frac{\phi(m)+\phi(2m)}{2} &= \frac{\phi(m)+\phi(m)}{2}\\
&=\phi(m).
\end{align*}

Regardless of the parity of $m$, there are always $\phi(m)$ $m$-valid eigenpairs.

\end{proof}

%%%%%%%%%%%%%%%%%%%%%%%%%%%
\section{Number of Isomorphy Classes}

Given a field $k$, not necessarily algebraically closed, we would like to know the number of the isomorphy classes of order $m$ automorphisms of $\Sl(2,k)$. 

\begin{definit}
Let $C(m,k)$ denote the number of isomorphy classes of order $m$ automorphisms of $\Sl(2,k)$.
\end{definit}

\begin{theorem}
\label{NumberClassesThm}
$C(m,k) = \frac{1}{2}\phi(m)$ or $0$ for $m >2$, and $C(2,k) = |k^*/(k^*)^2|$.
\end{theorem}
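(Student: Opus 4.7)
The plan is to translate the counting of isomorphy classes into a counting of $m$-valid eigenpairs modulo two relations: the negation $(\lambda_1,\lambda_2) \sim (-\lambda_1,-\lambda_2)$ from Corollary~\ref{IsomorphyCor}, and the requirement that the eigenpair be realized by some matrix $A \in \Sl(2, k[\sqrt{\alpha}])$ whose entries are $k$-multiples of $\sqrt{\alpha}$. By Corollary~\ref{EigenpairCor} there are $\phi(m)$ such eigenpairs over $\overline{k}$, and the argument splits into the cases $m > 2$ and $m = 2$.

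For $m > 2$, I would first show that $\{\lambda_1,\lambda_2\} = \{-\lambda_1,-\lambda_2\}$ forces $\lambda_2 = -\lambda_1$ and $\lambda_1^2 = -1$, corresponding to $m = 2$ only; hence for $m > 2$ negation partitions the $\phi(m)$ eigenpairs into exactly $\phi(m)/2$ classes of size two. Since $\lambda_1 + \lambda_2 \neq 0$ when $m > 2$, any realization $A = \sqrt{\alpha}\, A'$ with $A' \in \M(2,k)$ forces $\sqrt{\alpha}$ to be a $k^*$-multiple of $\lambda_1 + \lambda_2$, so the square class of $\alpha$ is uniquely determined by the eigenpair. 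Combined with Lemma~\ref{SquareClassLem}, each negation class corresponds to at most one isomorphy class, giving $C(m,k) \leq \phi(m)/2$.

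The main step, and the expected main obstacle, is the dichotomy: I need to show that either every one of the $\phi(m)/2$ negation classes is realizable, or none is. Given a candidate eigenpair, I can build a realization $A = \sqrt{\alpha}\, A'$ by taking $A'$ to be a companion matrix for $x^2 \mp x + \alpha^{-1}$ (with signs chosen so that $\sqrt{\alpha}\, A'$ has the correct trace), so realizability reduces to $(\lambda_1 + \lambda_2)^2 \in k$. To prove uniformity, fix a primitive $2m$-th root of unity $\zeta$ and set $u = \zeta + \zeta^{-1}$; every case-(a) trace has the form $u_j = \zeta^j + \zeta^{-j} = P_j(u)$ for a Chebyshev-like polynomial $P_j \in \mathbb{Z}[x]$ whose parity matches that of $j$. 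If $u^2 \in k$, then each $u_j$ lies in $k$ or in $k \cdot u$, so $u_j^2 \in k$; conversely, a Galois element sending $\zeta$ to $\zeta^j$ carries $u^2$ to $u_j^2$, giving the reverse implication. For odd $m$, negation carries a case-(a) pair $\{\zeta,\zeta^{-1}\}$ to the case-(b) pair $\{-\zeta,-\zeta^{-1}\}$, whose trace squared is again $u^2$, so the case-(a) and case-(b) realizability conditions agree. Hence realizability is uniform, and $C(m,k)$ is $\phi(m)/2$ or $0$ according to whether $u^2 \in k$.

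For $m = 2$, the unique eigenpair is $\{i,-i\}$, which has trace zero, so the constraint forcing a particular $\alpha$ is vacuous. For every $\alpha \in k^*$ one can explicitly exhibit a trace-zero $A \in \Sl(2, k[\sqrt{\alpha}])$ whose entries are $k$-multiples of $\sqrt{\alpha}$ (for instance the antidiagonal matrix with entries $\sqrt{\alpha}$ and $-\alpha^{-1}\sqrt{\alpha}$, which squares to $-I$), and by Lemma~\ref{SquareClassLem} distinct square classes of $\alpha$ yield non-isomorphic automorphisms. This yields $C(2,k) = |k^*/(k^*)^2|$.
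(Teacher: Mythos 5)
Your proposal is correct and its skeleton matches the paper's: count the $\phi(m)$ $m$-valid eigenpairs, identify $(\lambda_1,\lambda_2)$ with $(-\lambda_1,-\lambda_2)$ via Corollary~\ref{IsomorphyCor} to get $\phi(m)/2$ candidate classes for $m>2$, and observe that the nonvanishing of $\lambda_1+\lambda_2$ pins down the square class of $\alpha$, so each candidate yields at most one isomorphy class. The genuine difference is that you prove the all-or-nothing dichotomy inside the theorem: the paper's own proof stops at ``exactly two $m$-valid eigenpairs induce the same isomorphy class, assuming the isomorphy classes exist,'' which by itself only yields $C(m,k)\le \frac{1}{2}\phi(m)$; the uniformity of realizability across eigenpairs is only supplied afterwards by Lemma~\ref{EigenPowerLem} (your Chebyshev-type identity $\zeta^j+\zeta^{-j}=P_j(\zeta+\zeta^{-1})$ with parity matching $j$ is exactly the content of that lemma) and is never explicitly wired back into the count. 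Your companion-matrix construction showing that $(\lambda_1+\lambda_2)^2\in k$ suffices for realizability, and your explicit trace-zero matrix for $m=2$ (where the paper simply cites \cite{HW02}), make the argument self-contained. One small repair: for a general $k$ there need not be a Galois element of $k(\zeta)/k$ sending $\zeta$ to $\zeta^j$, so the converse direction of the dichotomy is better obtained by choosing $j'$ with $jj'\equiv 1 \pmod{2m}$ and noting $\zeta+\zeta^{-1}=P_{j'}(\zeta^j+\zeta^{-j})$, after which the same parity argument runs in reverse.
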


\begin{proof}
From Corollary 2 in \cite{HW02}, we know that $C(2,k) = |k^*/(k^*)^2|$. This is also clear from our results, since there is exactly one $2$-valid eigenpair, consisting of the two roots of $-1$.

Now assume $m > 2$. We claim that each $m$-valid eigenpair induces either one or zero isomorphy classes. Recall that if $\Inn_A$ is an order $m$ automorphism, then by Theorem \ref{CanonThm} we may assume that $\lambda$ is an $m$-th or $2m$-th primitive root of unity and 
$$A = \left(\begin{array}{cc}a & b \\-\frac{m_A(a)}{b} & -a+\lambda+\lambda^{-1}\end{array}\right)$$

or 

$$A = \left(\begin{array}{cc}\lambda & 0 \\c & \lambda^{-1} \end{array}\right),$$

where $\det(A) = 1$ and the entries of $A$ are in $k$, or are $k$-multiples of $\sqrt{\alpha}$ for some $\alpha \in k$. If $\lambda+\lambda^{-1}$ is nonzero, then $\lambda+\lambda^{-1}$ can lie in at most one square class of $k$. We need only show that $\lambda+\lambda^{-1} \ne 0$ when $m >2$. If $\lambda+\lambda^{-1} = 0$, then we can rearrange this equation to get $\lambda^2 = -1$, which is the case only when $m = 2$. 

In Corollary \ref{EigenpairCor}, we showed that there are always $\phi(m)$ $m$-valid eigenpairs. It follows from Corollary \ref{IsomorphyCor} that if $\Inn_A$ and $\Inn_B$ are isomorphic where $A, B \in \Sl(2,k[\sqrt{\alpha}])$, then $A$ has the same eigenvalues as $B$ or $-B$. So, $\Inn_A$ and $\Inn_{-A}$ are isomorphic. If $A$ has eigenvalues $\lambda$ and $\lambda^{-1}$, then $-A$ has eigenvalues $-\lambda$ and $-\lambda^{-1}$. Therefore, exactly two $m$-valid eigenpairs induce the same isomorphy class of order $m$ automorphisms of $\Sl(2,k)$, assuming the isomorphy classes exist.
\end{proof}

For the remainder of this section, we consider how many quadratic extensions of $k$ can induce an order $m$ automorphism of $\Sl(2,k)$, specifically when $m >2$.

\begin{lem}
\label{EigenPowerLem}
Let $k$ be a field, $\alpha\in k$, and suppose $\lambda$ is an $l$th primitive root of unity.
\begin{enumerate}[(a)]
\item If $\lambda$ is a $k$-multiple of $\sqrt{\alpha}$, then so is $\lambda^r$ for all odd integers $r$, and $\lambda^r \in k$ for all even integers $r$.
\item If $\lambda + \lambda^{-1}$ is a $k$-multiple of $\sqrt{\alpha}$, then so is $\lambda^r+\lambda^{-r}$ for all odd integers $r$ and $\lambda^r +\lambda^{-r} \in k$ for all even integers $r$.
\end{enumerate}
\end{lem}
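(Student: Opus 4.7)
\medskip
\noindent\textbf{Proof proposal.} Both statements follow from the single observation that $(\sqrt{\alpha})^2 = \alpha \in k$, combined with the familiar Chebyshev-type recurrence in part (b). The plan is straightforward induction in both cases, with the only mildly delicate point being that the lemma asks about \emph{all} integers $r$ (including negatives), not just positive ones.

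For part (a), write $\lambda = c\sqrt{\alpha}$ with $c \in k$. Then $\lambda^2 = c^2\alpha \in k$, and by a one-line induction on $j\ge 0$,
\[
\lambda^{2j} = (c^2\alpha)^j \in k, \qquad \lambda^{2j+1} = (c^2\alpha)^j c \sqrt{\alpha},
\]
which is a $k$-multiple of $\sqrt{\alpha}$. To handle negative exponents, note that $\lambda$ is an $l$th root of unity, so $\lambda^{-r} = \lambda^{l-r}$; equivalently, if $\lambda^r = d\sqrt{\alpha}$ with $d\in k^*$, then $\lambda^{-r} = (d\alpha)^{-1}\sqrt{\alpha}$, and if $\lambda^r = e\in k^*$, then $\lambda^{-r} = e^{-1}\in k$. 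Either way the parity of $r$ determines the form.

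For part (b), set $s = \lambda + \lambda^{-1}$ and $a_r = \lambda^r + \lambda^{-r}$. Multiplying out,
\[
s\cdot a_r = (\lambda+\lambda^{-1})(\lambda^r+\lambda^{-r}) = a_{r+1} + a_{r-1},
\]
so we have the recurrence $a_{r+1} = s\,a_r - a_{r-1}$, with base cases $a_0 = 2 \in k$ and $a_1 = s$, a $k$-multiple of $\sqrt{\alpha}$. Writing $s = c\sqrt{\alpha}$ with $c\in k$ gives $s^2 = c^2\alpha \in k$. Now I induct on $r \ge 0$: multiplying a $k$-multiple of $\sqrt{\alpha}$ by $s$ lands in $k$, while multiplying an element of $k$ by $s$ yields a $k$-multiple of $\sqrt{\alpha}$; in either case subtracting $a_{r-1}$ (which has the same type as $a_{r+1}$, since their indices differ by $2$) preserves the type. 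This shows $a_r\in k$ when $r$ is even and $a_r$ is a $k$-multiple of $\sqrt{\alpha}$ when $r$ is odd, for all $r\ge 0$. Since $a_{-r}=a_r$, the same conclusion holds for negative $r$.

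There is no real obstacle: the recurrence transfers the $\mathbb{Z}/2$-grading ``$k$ versus $k\sqrt{\alpha}$'' on powers of $\sqrt{\alpha}$ faithfully to the sums $a_r$, and the rest is bookkeeping on parity.
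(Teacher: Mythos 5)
Your proof is correct and follows essentially the same route as the paper: part (b) rests on the identity $(\lambda+\lambda^{-1})(\lambda^{r}+\lambda^{-r}) = (\lambda^{r+1}+\lambda^{-(r+1)})+(\lambda^{r-1}+\lambda^{-(r-1)})$ and an induction transferring the parity of $r$ to the ``in $k$ versus $k$-multiple of $\sqrt{\alpha}$'' dichotomy, which is exactly the paper's argument (the paper merely declares part (a) clear and omits the explicit base cases and the remark $a_{-r}=a_r$ that you supply).
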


\begin{proof}

The proof of $(a)$ is clear. We probe $(b)$ by induction.
Let $r>1$ be even and suppose $\lambda + \lambda^{-1}$ and $\lambda^{r-1} + \lambda^{-(r-1)}$ are $k$-multiples of $\sqrt{\alpha}$, and that $\lambda^{r-2} + \lambda^{-(r-2)} \in k$. Then 
$$(\lambda + \lambda^{-1})(\lambda^{r-1} + \lambda^{-(r-1)}) = (\lambda^r+\lambda^{-r})+(\lambda^{r-2} + \lambda^{-(r-2)}) \in k.$$ Thus, $\lambda^r +\lambda^{-r} \in k$.

Let $r>1$ be odd and suppose $\lambda + \lambda^{-1}$ and $\lambda^{r-2} + \lambda^{-(r-2)}$ are $k$-multiples of $\sqrt{\alpha}$, and that $\lambda^{r-1} + \lambda^{-(r-1)} \in k.$ Then an argument similar to the above shows that $\lambda^r+\lambda^{-r}$ is a $k$-multiple of $\sqrt{\alpha}$.
\end{proof}

From Theorem \ref{NumberClassesThm}, if $m >2$, then each $m$-valid eigenpair can induce at most one isomorphy class of order $m$ automorphisms of $\Sl(2,k)$. Paired Lemma \ref{EigenPowerLem}, if $\Sl(2,k)$ has an order $m$ automorphism $\Inn_A$, then the entries of matrices $A$ that induce these automorphisms will have entries in $k$, or a single quadratic extension of $k$. This gives the following result.

\begin{cor}
If $m >2$ and $det(A) = 1 = \det(B)$, then it is not possible for $\Inn_A$ and $\Inn_B$ to be order $m$ automorphisms and for $A$ and $B$ to have entries in distinct quadratic extensions of $k$.
\end{cor}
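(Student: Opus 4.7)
The plan is to reduce to a single application of Lemma \ref{EigenPowerLem}(b) by normalizing the eigenvalues. Using Theorem \ref{CanonThm} and Theorem \ref{EigenpairThm}, I write the eigenvalues of $A$ as $\lambda_A, \lambda_A^{-1}$, where $\lambda_A$ is a primitive $m$-th or $2m$-th root of unity, and similarly write those of $B$ as $\lambda_B, \lambda_B^{-1}$. The key preliminary remark is that $\Tr(A)=\lambda_A+\lambda_A^{-1}$ is the sum of the two diagonal entries of $A$, each of which is a $k$-multiple of $\sqrt{\alpha}$; hence $\lambda_A+\lambda_A^{-1}$ is itself a $k$-multiple of $\sqrt{\alpha}$, and likewise $\lambda_B+\lambda_B^{-1}$ is a $k$-multiple of $\sqrt{\gamma}$. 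Since $m>2$, Theorem \ref{NumberClassesThm} (or a direct check on roots of unity) shows that both of these traces are nonzero.

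Next I would normalize the eigenvalues. If $\lambda_A$ is a primitive $m$-th root of unity (which forces $m$ odd), then $-\lambda_A$ has order $2m$. Replacing $A$ by $-A$ leaves $\Inn_A$ unchanged, preserves the fact that every entry is a $k$-multiple of $\sqrt{\alpha}$, and upgrades $\lambda_A$ to a primitive $2m$-th root of unity. After performing the analogous replacement for $B$ if necessary, I may assume both $\lambda_A$ and $\lambda_B$ are primitive $2m$-th roots of unity. They then generate the same cyclic group of order $2m$, so I can write $\lambda_B=\lambda_A^{r}$ for some integer $r$ with $\gcd(r,2m)=1$; the evenness of $2m$ forces $r$ to be odd.

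With $r$ odd in hand, Lemma \ref{EigenPowerLem}(b) applied to $\lambda_A$ yields that $\lambda_A^{r}+\lambda_A^{-r}$ is a $k$-multiple of $\sqrt{\alpha}$; but this equals $\lambda_B+\lambda_B^{-1}$, which is also a nonzero $k$-multiple of $\sqrt{\gamma}$. Writing the common value as $c_1\sqrt{\alpha}=c_2\sqrt{\gamma}$ with $c_1,c_2\in k^{*}$ and squaring gives $\alpha/\gamma=(c_2/c_1)^{2}\in(k^{*})^{2}$, so $k[\sqrt{\alpha}]=k[\sqrt{\gamma}]$, contradicting the hypothesis that $A$ and $B$ lie in distinct quadratic extensions of $k$. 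The only slightly delicate point is the normalization step, which requires checking that passing from $A$ to $-A$ preserves every relevant piece of structure --- the automorphism $\Inn_A$, the order $m$, and the square class of the entries --- so that the reduction to the order-$2m$ case is legitimate; with this in place the remainder is a routine application of Lemma \ref{EigenPowerLem}(b) and elementary manipulation of the resulting equality of square roots.
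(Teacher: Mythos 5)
Your proof is correct and follows essentially the same route as the paper, which obtains the corollary from the nonvanishing of $\lambda+\lambda^{-1}$ for $m>2$ (established inside the proof of Theorem \ref{NumberClassesThm}) combined with Lemma \ref{EigenPowerLem}. The paper only sketches this in one sentence; your normalization of both eigenvalues to primitive $2m$-th roots of unity so that $\lambda_B=\lambda_A^{r}$ with $r$ odd, followed by comparison of the nonzero traces, is precisely the detail needed to make the appeal to Lemma \ref{EigenPowerLem}(b) rigorous.
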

%%%%%%%%%%%%%
\section{Examples}

We now look at a few examples over different fields $k$.

\begin{beisp}[$k = \overline{k}$]

Since all roots of unity will lie in $k$ when $k$ is algebraically closed, then every $m$-valid eigenpair, $(\lambda_1, \lambda_2)$, will induce an order $m$ automorphism of $\Sl(2,k)$ of the form $\Inn_A$ where $A = \left(\begin{array}{cc}\lambda_1 & 0 \\0 & \lambda_2\end{array}\right)$. The following results from Theorem \ref{NumberClassesThm}:

\begin{theorem}
  $C(2,\overline{k}) =1 $ and $C(m,\overline{k}) = \frac{1}{2}\phi(m)$ when $m >2$.
 \end{theorem}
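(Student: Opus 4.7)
The plan is to reduce the theorem directly to Theorem \ref{NumberClassesThm} by verifying, over an algebraically closed field, two things: (i) for $m=2$, the square-class count is $1$; and (ii) for $m>2$, we are in the ``$\tfrac{1}{2}\phi(m)$'' case rather than the ``$0$'' case of the dichotomy in Theorem \ref{NumberClassesThm}.

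For the $m=2$ assertion, I would simply invoke Theorem \ref{NumberClassesThm}, which says $C(2,k)=|k^*/(k^*)^2|$, together with the fact that every nonzero element of $\overline{k}$ has a square root in $\overline{k}$, so $|\overline{k}^*/(\overline{k}^*)^2|=1$. This gives $C(2,\overline{k})=1$.

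For the case $m>2$, the key point is that Theorem \ref{NumberClassesThm} leaves open the possibility $C(m,k)=0$, which would occur if some $m$-valid eigenpair failed to actually be realized by a matrix in $\Sl(2,k)$. To rule this out over $\overline{k}$, I would take any $m$-valid eigenpair $(\lambda_1,\lambda_2)$ as characterized by Theorem \ref{EigenpairThm} (primitive $2m$-th or, when $m$ is odd, $m$-th roots of unity), observe that both $\lambda_1$ and $\lambda_2$ lie in $\overline{k}$ since $\overline{k}$ contains all roots of unity, and note that the diagonal matrix $A=\diag(\lambda_1,\lambda_2)$ then lies in $\Sl(2,\overline{k})$ and by construction has $\Inn_A$ of order exactly $m$ on $\Sl(2,\overline{k})$. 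Thus each of the $\phi(m)$ $m$-valid eigenpairs (Corollary \ref{EigenpairCor}) genuinely produces an order $m$ automorphism.

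Finally, I would apply the pairing argument already appearing in the proof of Theorem \ref{NumberClassesThm}: by Corollary \ref{IsomorphyCor}, two $m$-valid eigenpairs $(\lambda,\lambda^{-1})$ and $(-\lambda,-\lambda^{-1})$ give isomorphic automorphisms, and these are the only coincidences (for $m>2$ these two pairs are genuinely distinct, since $\lambda=-\lambda$ would force $\lambda=0$). Hence the $\phi(m)$ eigenpairs collapse two-to-one onto isomorphy classes, yielding $C(m,\overline{k})=\tfrac{1}{2}\phi(m)$. There is no real obstacle here; the only substantive point is the realizability verification in the $m>2$ case, which is immediate from algebraic closure.
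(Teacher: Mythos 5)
Your proposal is correct and follows essentially the same route as the paper: the paper's argument is precisely that $\overline{k}$ contains all roots of unity, so every $m$-valid eigenpair is realized by a diagonal matrix in $\Sl(2,\overline{k})$, and the counts then follow from Theorem \ref{NumberClassesThm}. Your additional verification that the two-to-one pairing of eigenpairs is genuine for $m>2$ is just making explicit what the paper already established inside the proof of Theorem \ref{NumberClassesThm} via the observation that $\lambda+\lambda^{-1}\ne 0$.
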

\end{beisp}
\begin{beisp}[$k = \mathbb{R}$]

Let $i$ denote the square root of -1 and $\lambda$ be an $l$th primitive root of unity, where we assume $l = 2m$ or $l = m$ and $m$ is odd. We know that $(\lambda, \lambda^{l-1})$ is an $l$-valid eigenpair by Theorem \ref{EigenpairThm}. For this eigenpair to induce an automorphism on $\Sl(2,\mathbb{R})$, we need one of the following to be the case:
\begin{enumerate}[(a)]
\item $\lambda \in \mathbb{R}$;
\item $\lambda = \gamma i$, for $\gamma \in \mathbb{R}$;
\item $\lambda + \lambda^{l-1} \in \mathbb{R}$; or
\item $\lambda + \lambda^{l-1} = \gamma i$, for $\gamma \in \mathbb{R}$.
\end{enumerate}

These conditions follow since the entries of $A$ must lie in $\mathbb{R}$ or be $\mathbb{R}$-multiples of $i$. $(a)$ and $(b)$ correspond to $A = \left(\begin{array}{cc}\lambda & 0 \\ c & \lambda^{l-1}\end{array}\right)$ inducing the automorphism $\Inn_A$, and $(c)$ and $(d)$ correspond to $A = \left(\begin{array}{cc}a & b \\-\frac{m_A(a)}{b} & -a+\lambda+\lambda^{l-1}\end{array}\right)$ also inducing  the automorphism $\Inn_A$. Further, $(a)$ and $(c)$ correspond to the entries of $A$ falling in $\mathbb{R}$, and $(b)$ and $(d)$ correspond to the entries of $A$ being $\mathbb{R}$-multiples of $i$. Using De Moivre's formula, we can write $$\lambda = \cos \left( \frac{2\pi r}{l} \right) + i \sin \left( \frac{2\pi r}{l} \right)$$ and  $$\lambda^{l-1} = \cos \left( \frac{2\pi r}{l} \right) - i \sin \left( \frac{2\pi r}{l} \right)$$ for some integer $r$ where $0 < r < l$ and $r$ is coprime to $l$.  We can easily check to see when we have each of the four cases listed above.

\begin{enumerate}[(a)]
\item When is $\lambda \in \mathbb{R}$? If $\lambda \in \mathbb{R}$, then $\lambda^h \in \mathbb{R}$ for all integers $h$. So we may assume that $r = 1$. This will occur when $\sin \left( \frac{2\pi}{l} \right)= 0$. Thus, $l = 2$ and $\lambda = -1$. Since we are assuming $m \ge 2$, this cannot happen.
\item When is $\lambda = \gamma i$, for $\gamma \in \mathbb{R}$? Similar to the previous case, we may assume that $r=1$. Then $\lambda = \gamma i$, for $\gamma \in \mathbb{R}$ will occur when $\cos \left( \frac{2\pi}{l} \right)= 0$. This can happen only when $\frac{2\pi}{l} = \frac{\pi}{2}$ or $\frac{3\pi}{2}$, which yields $l = 4$ and $l = \frac{4}{3}$, respectively. The latter solution does not concern us, but the solution $l = 4$ occurs if $\lambda = i$. This happens when $m = 2$, and there is one $2$-valid eigenpair, $(i,-i)$.
\item When is $\lambda + \lambda^{l-1} \in \mathbb{R}$? Using De Moivre's formula, we see that 
\begin{align*}
\lambda+\lambda^{l-1} &= \left( \cos \left( \frac{2\pi r}{l} \right) + i \sin \left( \frac{2\pi r}{l} \right) \right)+ \left(\cos \left( \frac{2\pi r}{l} \right) - i \sin \left( \frac{2\pi r}{l} \right) \right) \\
&= 2\cos \left( \frac{2\pi r}{l} \right) \in \mathbb{R}.
\end{align*}
This is always the case.
\item Based on the previous case, we see that $\lambda + \lambda^{l-1} = \gamma i$ for $\gamma \in \mathbb{R}$ is never the case.
\end{enumerate}

If $m = 2$, then $l = 4$. There are two isomorphy classes of order 2 automorphisms: one where the matrix takes entries in $\mathbb{R}$ from $(c)$, and one where the matrix has entries that are $\mathbb{R}$-multiples of $i$ from case $(b)$. Thus, $C(2,\mathbb{R})= 2$, which agrees with the results in \cite{HW02} and Theorem \ref{NumberClassesThm}. 

Suppose $m >2$. Case $(c)$ applies here. It follows that there are always $m$th and $2m$th primitive roots of unity. We have the following result.

\begin{theorem}
If $m = 2$, then $C(2, \mathbb{R}) = 2$; if $m > 2$, then $C(m,\mathbb{R}) = \frac{1}{2}\phi(m).$
\end{theorem}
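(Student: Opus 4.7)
The plan is to combine the case analysis carried out just before the theorem statement with Theorem \ref{NumberClassesThm}. The four cases $(a)$--$(d)$ already exhaust, for a given $l$th primitive root of unity $\lambda$, the ways in which the associated matrix $A$ can have entries either in $\mathbb{R}$ or as $\mathbb{R}$-multiples of $i$. So the work left is simply to count which cases survive for each value of $m$, and match that count against Theorem \ref{NumberClassesThm}.

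First I would dispose of $m = 2$. Here $l = 4$, and both case $(b)$ (giving a matrix with entries that are $\mathbb{R}$-multiples of $i$, e.g.\ $A = \diag(i,-i)$) and case $(c)$ (giving a matrix with real entries, e.g.\ coming from $\lambda + \lambda^{-1} = 0$ via the off-diagonal form of Theorem \ref{CanonThm}) yield legitimate order $2$ automorphisms. These two automorphisms sit in different quadratic extensions of $\mathbb{R}$, namely $\mathbb{R}$ itself and $\mathbb{R}[i]$, so by Lemma \ref{SquareClassLem} they lie in distinct isomorphy classes over $\Gl(2,\mathbb{R})$. Combined with Theorem \ref{NumberClassesThm}, which gives $C(2,\mathbb{R}) = |\mathbb{R}^*/(\mathbb{R}^*)^2| = 2$, this confirms $C(2,\mathbb{R}) = 2$.

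Next, for $m > 2$: by Theorem \ref{NumberClassesThm}, each $m$-valid eigenpair induces either zero or one isomorphy class of order $m$ automorphisms of $\Sl(2,\mathbb{R})$, and there are $\phi(m)$ such eigenpairs, coming in pairs $\{(\lambda,\lambda^{-1}),(-\lambda,-\lambda^{-1})\}$ that induce the same class. So it suffices to show that every $m$-valid eigenpair actually does induce an order $m$ automorphism over $\mathbb{R}$. This follows from case $(c)$: for any primitive $l$th root of unity $\lambda$ with $l \in \{m,2m\}$, we have
\[
\lambda + \lambda^{-1} = 2\cos\!\left(\tfrac{2\pi r}{l}\right) \in \mathbb{R},
\]
so one can pick (for example) $a = \tfrac{1}{2}(\lambda + \lambda^{-1})$ and $b = 1$, yielding a matrix $A$ of the form in Theorem \ref{CanonThm} with all entries in $\mathbb{R}$ (noting $m_A(a) \in \mathbb{R}$ because its coefficients $\lambda+\lambda^{-1}$ and $1$ are real). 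Hence the class is non-empty, and the count is exactly $\tfrac{1}{2}\phi(m)$.

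The only potential obstacle is confirming that the eigenpair-to-class counting from Theorem \ref{NumberClassesThm} is not spoiled by some eigenpair secretly failing to give an $\mathbb{R}$-defined automorphism, but this is precisely ruled out by case $(c)$ holding universally; cases $(a)$, $(b)$, $(d)$ are all ruled out or reduced to the $m=2$ situation, so no additional classes appear and none are lost. Citing \cite{HW02} or the matching output of Theorem \ref{NumberClassesThm} for the involution count closes the $m=2$ piece, and the computation above closes the $m>2$ piece.
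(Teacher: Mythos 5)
Your proposal is correct and follows essentially the same route as the paper: the same four-case De Moivre analysis showing that case $(c)$ always holds and cases $(a)$, $(b)$, $(d)$ only contribute at $m=2$, combined with Theorem \ref{NumberClassesThm} for the count. Your added details (the explicit choice $a = \tfrac{1}{2}(\lambda+\lambda^{-1})$, $b=1$ realizing each eigenpair over $\mathbb{R}$, and the appeal to Lemma \ref{SquareClassLem} to separate the two classes when $m=2$) only make the paper's argument more explicit.
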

\end{beisp}

\begin{beisp}[$k = \mathbb{Q}$]

We know that $C(2,\mathbb{Q})$ is infinite. Consider the case where $m>2$. As noted in the case where $k = \mathbb{R}$, if $\lambda$ is an $l$th root of unity where $l = m$ or $2m$, then $\lambda+\lambda^{-1} = 2\cos \left( \frac{2\pi r}{l} \right)$. $\Sl(2,\mathbb{Q})$ will have order $m$ automorphisms if and only if $\cos \left( \frac{2\pi r}{l} \right)$ lies in $\mathbb{Q}$ or is a $\mathbb{Q}$ multiple of $\sqrt{p}$ for some prime $p$.

We first examine the case when $\cos \left( \frac{2\pi r}{l} \right)$ lies in $\mathbb{Q}$. By Niven's Theorem, Corollary 3.12 of \cite{Niv56}, $\cos x$ and $\frac{x}{\pi}$ are simultaneously rational only when $\cos x = 0, \pm \frac{1}{2}$, or $\pm 1$. By Lemma \ref{EigenPowerLem}, we may assume $r = 1$. Then $\cos \left( \frac{2\pi}{l} \right)$ is rational if and only if $l = 6, 4, 3, 2, \frac{3}{2}, \frac{4}{3}$, or $\frac{6}{5}$. Since $l$ must be an integer, we need only consider $l = 6, 4, 3,$ or 2. Since $m >2 $ we can further restrict our considerations to $l = 3$ or 6. Both of these correspond to order 3 automorphisms. There is $\frac{\phi(3)}{2} = 1$ isomorphy class of order 3 automorphisms of $\Sl(2,\mathbb{Q})$. If we let $l= 6$ and choose $a = b= 1$, then $$A = \left(\begin{array}{cc}a & b \\-\frac{m_A(a)}{b} & -a+\lambda+\lambda^{l-1}\end{array}\right) = \left(\begin{array}{cc}1 & 1 \\-1 & 0\end{array}\right)$$ is a matrix that will induce an order 3 automorphism.

We now consider the case when $2\cos \left( \frac{2\pi r}{l} \right)$ is a $\mathbb{Q}$ multiple of $\sqrt{p}$ for some prime number $p$. Again, it is sufficient to consider the case where $r = 1$. We note the following lemma which is a part of Theorem 3.9 in \cite{Niv56}.

\begin{lem}
Let $l$ be a positive integer. Then $2\cos \left( \frac{2\pi}{l} \right)$ is an algebraic integer which satisfies a minimal polynomial of degree $\frac{\phi(l)}{2}$.
\end{lem}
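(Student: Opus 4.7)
The plan is to identify $2\cos(2\pi/l)$ with $\zeta_l + \zeta_l^{-1}$ where $\zeta_l = e^{2\pi i/l}$ is a primitive $l$-th root of unity, and then deduce both claims from standard facts about the cyclotomic field $\mathbb{Q}(\zeta_l)$ and its maximal real subfield. Since the statement only makes sense when $\phi(l)/2$ is an integer, I would first restrict to $l \geq 3$ (noting that for $l = 1, 2$ the element $2\cos(2\pi/l) = \pm 2$ is an integer, handled separately if needed).

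For the algebraic-integer half, I would simply observe that $\zeta_l$ is a root of the $l$-th cyclotomic polynomial $\Phi_l(x) \in \mathbb{Z}[x]$, hence an algebraic integer; its inverse $\zeta_l^{-1} = \overline{\zeta_l}$ is likewise an algebraic integer (being another primitive $l$-th root of unity), and the sum of two algebraic integers is an algebraic integer. Therefore $2\cos(2\pi/l) = \zeta_l + \zeta_l^{-1}$ is an algebraic integer.

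For the degree statement, I would invoke the irreducibility of $\Phi_l$ to obtain $[\mathbb{Q}(\zeta_l):\mathbb{Q}] = \phi(l)$, and then apply the tower law to $\mathbb{Q} \subseteq \mathbb{Q}(\zeta_l + \zeta_l^{-1}) \subseteq \mathbb{Q}(\zeta_l)$. On the one hand, $\zeta_l$ satisfies the quadratic $x^2 - (\zeta_l + \zeta_l^{-1}) x + 1 = 0$ over $\mathbb{Q}(\zeta_l + \zeta_l^{-1})$, so the upper index is at most $2$. On the other hand, for $l \geq 3$ the element $\zeta_l$ is non-real while $\zeta_l + \zeta_l^{-1}$ lies in $\mathbb{R}$, so $\zeta_l \notin \mathbb{Q}(\zeta_l + \zeta_l^{-1})$ and the upper index is exactly $2$. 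The tower law then yields
$$[\mathbb{Q}(\zeta_l + \zeta_l^{-1}):\mathbb{Q}] = \frac{\phi(l)}{2},$$
which is the degree of the minimal polynomial of $2\cos(2\pi/l)$, as desired.

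The main obstacle is not really technical but rather a bookkeeping matter: one needs the standard fact that $\Phi_l$ is irreducible over $\mathbb{Q}$ (which I would cite rather than reprove), and one must dispose of the small cases $l = 1, 2$ where $\phi(l)/2$ is not an integer and the claim must be interpreted carefully. Everything else reduces to the tower law and the observation that complex conjugation is a nontrivial element of $\mathrm{Gal}(\mathbb{Q}(\zeta_l)/\mathbb{Q})$ fixing $\zeta_l + \zeta_l^{-1}$.
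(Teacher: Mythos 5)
Your proof is correct: identifying $2\cos(2\pi/l)$ with $\zeta_l+\zeta_l^{-1}$, getting integrality from closure of algebraic integers under sums, and getting the degree from the tower $\mathbb{Q}\subseteq\mathbb{Q}(\zeta_l+\zeta_l^{-1})\subseteq\mathbb{Q}(\zeta_l)$ with upper index exactly $2$ for $l\ge 3$ is the standard argument, and it is essentially what underlies Theorem 3.9 of Niven, which the paper simply cites without giving any proof of its own. Your remark about $l=1,2$ is also apt, since the lemma as literally stated fails there ($\phi(l)/2$ is not even an integer), so the restriction to $l\ge 3$ is genuinely needed.
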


Since we are interested in knowing when $2\cos \left( \frac{2\pi r}{l} \right) = \mu \sqrt{p}$ for some $\mu \in \mathbb{Q}$ and prime $p$, we need $2\cos \left( \frac{2\pi r}{l} \right)$ to satisfy a polynomial of the form $x^2-\mu^2p = 0$. A necessary condition for such $l$ is that $\frac{\phi(l)}{2} = 2$, or $\phi(l) = 4.$

If $l = p^m$ for some prime $p$, then $$4 = \phi(p^m) = p^{m-1}(p-1).$$ Note that $p$ and $p-1$ cannot both be even, so it must be the case that $p^{m-1} = 4$ and $p-1 = 1$, which means $l = 8$, or $p^{m-1} = 1$ and $p-1 = 4$, which means $l = 5$. If $l = p^mq^t$ for some distinct primes $p$ and $q$, then $$4 = \phi(p^mq^t) = (p^m-p^{m-1})(q^t-q^{t-1}).$$ If $p^m-p^{m-1} = 2 = q^t-q^{t-1}$, then $p^m = 4$ and $q^t = 3$ which means $l = 12$. (Other primes and/or larger powers would not yield $\phi(p^m) = 2.$) If $p^m-p^{m-1} = 4$ and $q^t-q^{t-1} = 1$, then $p^m = 8$ or 5, and $q^t = 2$. Since $p$ and $q$ are distinct, we have $l = 10$. If $l$ is a multiple of three or more distinct primes, then $\phi(l) >4$. So, the only $l$ for which $\phi(l) = 4$ are $l= 5, 8, 10$ and 12. Note that 

$$2\cos \left( \frac{2\pi }{5} \right) = \frac{-1+\sqrt{5}}{2},$$

$$2\cos \left( \frac{2\pi }{8} \right) = \sqrt{2},$$

$$2\cos \left( \frac{2\pi }{10} \right) = \frac{1+\sqrt{5}}{2},$$

and 

$$2\cos \left( \frac{2\pi }{12} \right) = \sqrt{3}.$$

When $l = 8$ or 12, $2\cos \left( \frac{2\pi r}{l} \right)$ satisfies a polynomial of the form $x^2-\mu^2p = 0$, but no linear polynomial and for no other values of $l$. Thus, $\Sl(2, \mathbb{Q})$ also has automorphisms of order 4 and 6.

\begin{theorem}
$\Sl(2, \mathbb{Q})$ only has finite order automorphisms of orders 1, 2, 3, 4, and 6. Further, $C(2, \mathbb{Q})$ is infinite, and $C(3, \mathbb{Q}) = C(4, \mathbb{Q}) = C(6, \mathbb{Q}) = 1$.
\end{theorem}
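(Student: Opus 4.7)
The plan is to assemble the detailed case analysis already carried out immediately before the theorem into a clean argument. The claim for $C(2,\mathbb{Q})$ is immediate: Theorem \ref{NumberClassesThm} gives $C(2,\mathbb{Q}) = |\mathbb{Q}^*/(\mathbb{Q}^*)^2|$, and Section~2 records that this set of square classes is infinite (represented by $\pm 1$ together with all primes). The substantive content of the theorem lies at orders $m > 2$.

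For $m > 2$, Theorem \ref{NumberClassesThm} reduces the question to a dichotomy: $C(m,\mathbb{Q})$ is either $0$ or $\tfrac{1}{2}\phi(m)$. So for each $m > 2$ I need to decide whether a single isomorphy class is realized. By Theorem \ref{EigenpairThm} the relevant eigenvalues are primitive $l$-th roots of unity with $l = m$ or $l = 2m$, and by Theorem \ref{CanonThm} combined with Lemma \ref{kMult}, realization over $\mathbb{Q}$ boils down to requiring that $\lambda + \lambda^{-1} = 2\cos(2\pi r/l)$ either lie in $\mathbb{Q}$ or equal a $\mathbb{Q}$-multiple of $\sqrt{p}$ for some prime $p$.

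The case analysis preceding the theorem statement has already sifted these $l$: Niven's theorem restricts the rational case (with $m > 2$) to $l \in \{3,6\}$, and the degree count $\phi(l) = 4$ together with the explicit evaluations of $2\cos(2\pi/l)$ for $l \in \{5,8,10,12\}$ isolates $l = 8$ (yielding $\sqrt{2}$) and $l = 12$ (yielding $\sqrt{3}$) as the only surviving quadratic-extension candidates. Translating back: $l \in \{3,6\}$ corresponds to $m = 3$, $l = 8$ to $m = 4$, and $l = 12$ to $m = 6$. An explicit matrix exhibiting each automorphism is easy to write down (the preceding example furnishes one for $m = 3$, and analogous matrices built from $\sqrt{2}$ and $\sqrt{3}$ handle $m = 4, 6$), so each surviving case realizes the nontrivial branch of the dichotomy, giving $C(m,\mathbb{Q}) = \tfrac{1}{2}\phi(m) = 1$.

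The main technical subtlety, and essentially the only one, is ruling out $l \in \{5,10\}$: here $2\cos(2\pi/l) \in \mathbb{Q}(\sqrt{5})$ has a nonzero rational part, so it is neither rational nor a pure $\mathbb{Q}$-multiple of $\sqrt{5}$. I would dispatch this by direct squaring: any $\mathbb{Q}$-multiple of $\sqrt{\alpha}$ has rational square, whereas $\bigl((-1+\sqrt{5})/2\bigr)^2 = (3-\sqrt{5})/2 \notin \mathbb{Q}$, with the analogous check handling $l = 10$. Lemma \ref{EigenPowerLem}(b) also gives a systematic version of this obstruction: if $\lambda + \lambda^{-1}$ were a $\mathbb{Q}$-multiple of $\sqrt{\alpha}$, then $\lambda^2 + \lambda^{-2}$ would have to be rational, which fails for primitive $5$th and $10$th roots. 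With that exclusion recorded, the admissible list $\{1,2,3,4,6\}$ is exhaustive, completing the theorem.
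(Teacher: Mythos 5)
Your proposal is correct and follows essentially the same route as the paper: Niven's theorem handles the rational case (giving $l=3,6$, hence $m=3$), and the minimal-polynomial degree condition $\phi(l)=4$ together with the explicit values of $2\cos(2\pi/l)$ for $l=5,8,10,12$ isolates $m=4$ and $m=6$. Your explicit squaring argument (and the appeal to Lemma \ref{EigenPowerLem}(b)) to exclude $l=5,10$ just makes precise a step the paper states without elaboration.
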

\end{beisp}
\begin{beisp}[$k = \mathbb{F}_q$, $q = p^r$, $p \ne 2$]
If $m = 2$, then $C(2,\mathbb{F}_q) = 2$. Again, assume $m >2$. We need only determine when $m$th and $2m$th primitive roots of unity lie in $\mathbb{F}_q$ or are an $\mathbb{F}_q$-muliple of $\sqrt{\alpha}$ for some $\alpha \in \mathbb F_q$. We first consider the primitive roots which lie in $\mathbb{F}_q$. It is known that $\mathbb{F}_q \setminus \{ 0 \}$ is a cyclic multiplicative group of order $q-1$, so it contains elements of orders $q-1$, and all of $(q-1)$'s divisors. Thus, $\mathbb{F}_q $ will contain all of the primitive roots of unity of orders $q-1$ and its divisors.

We now consider the primitive roots of unity which are $\mathbb{F}_q$ multiples of $\sqrt{\alpha}$ for some $\alpha \in \mathbb{F}_q$. Suppose $\lambda = \mu \sqrt{\alpha}$ where $\mu$, $\alpha \in \mathbb{F}_q$. Note that $$\lambda^{q-1} = \mu^{q-1}\alpha^{\frac{q-1}{2}} = \alpha^{\frac{q-1}{2}}.$$ It follows that $\lambda^{2(q-1)} = 1$. The maximal possible value $l$ such that an $l$th primitive root of unity is an $\mathbb{F}_q$ multiple of $\sqrt{\alpha}$ for $\alpha \in \mathbb{F}_q$ is $2(q-1)$. To see that this maximal order of primitive roots of unity will always occur, suppose $\alpha \in \mathbb{F}_q$ is a $(q-1)$th  primitive root of unity. Then $\sqrt{\alpha}$ is a $2(q-1)$th primitive root of unity. This, along with Theorem \ref{NumberClassesThm} proves the following result.

\begin{theorem}
\begin{enumerate}[(a)]

\item If $m = 2$, then $C(2,\mathbb{F}_q) = 2$. 

\item If $m>2 $ is even and $2m$ divides $2(q-1)$, or if $m$ is odd and $m$ (and $2m$) divides $q-1$, then $C(m,\mathbb{F}_q) = \frac{\phi(m)}{2}$.

\item In any other case, $C(m,\mathbb{F}_q) = 0$.

\end{enumerate}

\end{theorem}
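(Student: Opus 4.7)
The plan is to derive the three cases from Theorem \ref{NumberClassesThm} combined with the order-of-unit analysis already carried out in the paragraphs preceding the statement.

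For (a), I would apply the $m=2$ half of Theorem \ref{NumberClassesThm} to obtain $C(2,\mathbb{F}_q) = |\mathbb{F}_q^*/(\mathbb{F}_q^*)^2|$, and then compute this index directly: since $q = p^r$ with $p$ odd, the group $\mathbb{F}_q^*$ is cyclic of even order $q - 1$, so the squaring endomorphism has kernel $\{\pm 1\}$ and image of index exactly $2$.

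For (b) and (c), I would invoke the $m > 2$ dichotomy of Theorem \ref{NumberClassesThm}, namely $C(m,\mathbb{F}_q) \in \{0, \phi(m)/2\}$, which reduces the problem to deciding whether \emph{some} $m$-valid eigenpair is realized by a canonical matrix over $\mathbb{F}_q$. By Theorem \ref{EigenpairThm} the eigenvalues in such a pair are primitive $2m$-th roots of unity, together with primitive $m$-th roots when $m$ is odd. The preamble to the theorem has shown that a primitive $l$-th root of unity lies in $\mathbb{F}_q$ or is an $\mathbb{F}_q$-multiple of $\sqrt{\alpha}$ for some $\alpha \in \mathbb{F}_q$ precisely when $l \mid 2(q-1)$: every such element satisfies $(\mu\sqrt{\alpha})^{2(q-1)} = 1$, and the bound is attained by $\sqrt{\alpha}$ when $\alpha \in \mathbb{F}_q^*$ is a primitive $(q-1)$-th root of unity. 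Taking $l = 2m$ gives the even-case condition $2m \mid 2(q-1)$; for odd $m$, the two sources $l = m$ and $l = 2m$ collapse to a single condition, because $m \mid q-1$ and $2m \mid q-1$ coincide when $m$ is odd and $q - 1$ is even. Under these hypotheses a valid eigenpair exists and $C(m, \mathbb{F}_q) = \phi(m)/2$; otherwise no eigenpair is realized and $C(m, \mathbb{F}_q) = 0$.

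I expect the main bookkeeping obstacle to be the odd-$m$ consolidation, namely verifying that the primitive $m$-th and $2m$-th root options do not independently expand the set of allowable prime powers beyond those given by the even-case divisibility. The substantive content --- the order bound $2(q - 1)$ and its sharpness --- is already in hand from the preamble, so the proof is essentially a packaging of those observations through Theorem \ref{NumberClassesThm}.
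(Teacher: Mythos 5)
Your proposal tracks the paper's own argument step for step: part (a) via the $m=2$ case of Theorem \ref{NumberClassesThm} and the index-two image of squaring on the cyclic group $\mathbb{F}_q^*$, and parts (b) and (c) via the dichotomy $C(m,\mathbb{F}_q)\in\{0,\tfrac{1}{2}\phi(m)\}$ together with the observation that $\mathbb{F}_q^*\cup\sqrt{\alpha}\,\mathbb{F}_q^*$ has order $2(q-1)$, so that a primitive $l$-th root of unity lies in $\mathbb{F}_q$ or in $\sqrt{\alpha}\,\mathbb{F}_q$ exactly when $l\mid 2(q-1)$. That much is the same route the paper takes.

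However, there is a genuine gap, and it is present in the paper's proof as well: you decide realizability of an $m$-valid eigenpair over $\mathbb{F}_q$ by asking whether the eigenvalue $\lambda$ itself lies in $\mathbb{F}_q\cup\sqrt{\alpha}\,\mathbb{F}_q$. But Theorem \ref{CanonThm} also allows the non-triangular canonical form, whose entries involve $\lambda$ only through the trace $\lambda+\lambda^{-1}$; for that form it suffices that $\lambda+\lambda^{-1}$ (not $\lambda$) lie in $\mathbb{F}_q$ or in $\sqrt{\alpha}\,\mathbb{F}_q$. This is strictly weaker: whenever the order $l$ of $\lambda$ divides $q+1$, one has $\lambda^{-1}=\lambda^{q}$, so $\lambda+\lambda^{-1}$ is a Frobenius trace and lies in $\mathbb{F}_q$ even though $\lambda\notin\mathbb{F}_q\cup\sqrt{\alpha}\,\mathbb{F}_q$. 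Concretely, take $q=5$ and $m=3$: the matrix $A=\left(\begin{smallmatrix}1&1\\2&3\end{smallmatrix}\right)\in\Sl(2,\mathbb{F}_5)$ has characteristic polynomial $x^2+x+1$, so $A^3=I$ while $A,A^2\neq\pm I$, and $\Inn_A$ is an order $3$ automorphism of $\Sl(2,\mathbb{F}_5)$; yet $3\nmid q-1=4$, so your criterion (and the statement as given) would return $C(3,\mathbb{F}_5)=0$. A similar failure occurs with traces landing in $\sqrt{\alpha}\,\mathbb{F}_q$ rather than $\mathbb{F}_q$ (for $q=5$, $m=6$ one has $\lambda+\lambda^{-1}=\sqrt{3}$ with $3$ a nonsquare mod $5$). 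The divisibility condition therefore has to be enlarged to account for $l\mid q+1$ and for the $\sqrt{\alpha}$-trace cases, and the reduction as written does not establish parts (b) and (c).
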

\end{beisp}
%%%%%%%%%%%%%%%

%\begin{thebibliography}{00}
%%1
%\bibitem{ahmt}
%P. N. \'Anh, D. Herbera and C. Menini, AB-5$^*$ for module and ring
%extensions, {\it Abelian Groups$,$ Module Theory and Topology},
%Lect. Notes. in Pure and Appl. Math. {\bf 201} (Marcel Dekker, 1999)
%59--68.
%
%%2
%\bibitem{campsdicks}
%R. Camps and W. Dicks, On semilocal rings, {\it Israel J. Math.} {\bf
%81} (1993) 203--211.
%
%%3
%\bibitem{jate2}
%A. V. Jategaonkar, Left principal ideal domains, {\it J. Algebra} {\bf
%8} (1968) 148--155.
%
%%4
%\bibitem{jate}
%A. V. Jategaonkar, Morita duality and Noetherian rings, {\it J.
%Algebra} {\bf 69} (1981) 358--371.
%
%%5
%\bibitem{kasch}
%F. Kasch, {\it Modules and Rings} (Academic Press, London, New York,
%1982).
%
%%6
%\bibitem{lam}
%T. Y. Lam, {\it Lectures on Modules and Rings}, Graduate Texts in
%Math. {\bf 189} (Springer-Verlag, Berlin, New York, Heidelberg, 1999).
%\bibitem{lem}
%B. Lemonnier, AB-5$^*$ et la dualite de Morita, C.R. Acad. Sc. Paris
%t. Series A, {\bf 289} (1979) 47--50.
%
%%7
%\bibitem{leptin}
%H. Leptin, Linear kompakte Moduln und Ringe, {\it Math. Z.} {\bf 62}
%(1955) 241--267.
%
%%8
%\bibitem{menini}
%C. Menini, Jacobson's conjecture, Morita duality and related
%questions, {\it J. Algebra} {\bf 103} (1986) 638--655.
%
%%9
%\bibitem{mul}
%B. J. M\"uller, On Morita duality, {\it Canad. J. Math.} {\bf 26}
%(1969) 1338--1347.
%
%%10
%\bibitem{vam}
%P. Vamos, Rings with duality, {\it Proc. London Math. Soc.} {\bf 35}
%(1977) 275--289.
%\end{thebibliography}

\end{document}